\documentclass{article}

\usepackage{amsfonts,amsbsy,amsmath,amssymb,amsthm,makeidx,latexsym,epsfig}
\usepackage{verbatim}
\usepackage{color}

\usepackage{mathrsfs}  

\newtheorem{defn}{Definition}[section]
\newtheorem{lemma}[defn]{Lemma}
\newtheorem{proposition}[defn]{Proposition}
\newtheorem{ex}[defn]{Example}
\newtheorem{thm}[defn]{Theorem}
\newtheorem{prop}[defn]{Proposition}
\newtheorem{cor}[defn]{Corollary}
\newtheorem{rem}[defn]{Remark}

\newtheorem{fig}[defn]{Figure}

\newcommand{\ltr}{ L^2(\mathbb R) }

\newcommand{\mn}{\mathbb N}
\newcommand{\mr}{\mathbb R}

\newcommand{\mz}{\mathbb Z}

\newcommand{\mt}{ E_{mb}T_{na}g}
\def\bp{{\noindent\bf Proof. \ }}
\def\ep{\hfill$\square$\par\bigskip}

\def\bqs{\begin{equation}}
\def\eqs{\tag*{$\square$}\end{equation}\par\bigskip}

\def\la{\langle}
\def\ra{\rangle}
\def\ga{\gamma}

\def\sukz{\sum\limits_{k\in \mz}}

\def\Span{\text{span}}

\def\bop{\begin{op}\rm}
\def\eop{\end{op}}

\def\bee{\begin{eqnarray}}
\def\ene{\end{eqnarray}}
\def\bes{\begin{eqnarray*}}
\def\ens{\end{eqnarray*}}
\def\bei{\begin{itemize}}
\def\eni{\end{itemize}}
\def\bt{\begin{thm}}
\def\et{\end{thm}}
\def\bc{\begin{cor}}
\def\ec{\end{cor}}
\def\bpr{\begin{prop}}
\def\epr{\end{prop}}
\def\bl{\begin{lemma}}
\def\el{\end{lemma}}
\def\bd{\begin{defn}}
\def\ed{\end{defn}}
\def\bex{\begin{ex}}
\def\enx{\end{ex}}
\def\bfi{\begin{fig}}
\def\efi{\end{fig}}

\def\mt{{\mathbb T}}


\newcommand{\R}{\mathbb{R}}
\newcommand{\Z}{\mathbb{Z}}
\newcommand{\C}{\mathbb{C}}
\newcommand{\N}{\mathbb{N}}
\newcommand{\T}{\mathbb{T}}

\newcommand{\be}{\begin{equation}}
\newcommand{\ee}{\end{equation}}
\newcommand{\re}{\textrm{Re\,}}
\newcommand{\im}{\textrm{Im\,}}
\newcommand{\mydot}{\,\cdot\,}
\newcommand{\ml}{\vskip 6pt\noindent}
\DeclareMathOperator{\Arg}{Arg}
\DeclareMathOperator{\clos}{clos}
\newcommand{\veps}{\varepsilon}
\newcommand{\sP}{\mathscr{P}}
\newcommand{\inn}[2]{{\langle #1,#2 \rangle}}

\usepackage{hyperref}

\title{Fractional and Complex Pseudo-Splines and the Construction of Parseval Frames}

\date{\today}

\author{Peter Massopust\footnote{Research partially supported by DFG Grant MA 5801/2-1}, Brigitte Forster\footnote{Research partially supported by DFG Grant FO 792/2-1}, Ole Christensen}

\begin{document}

\maketitle

\begin{abstract} Pseudo-splines of integer order $(m,\ell)$ were introduced by Daubechies, Han, Ron, and Shen
as a family which allows interpolation between the classical B-splines and the Daubechies' scaling
functions. The purpose of this paper is to generalize the pseudo-splines to fractional and complex orders $(z, \ell)$ with $\alpha:=\re z \geq 1$. This allows increased flexibility in regard to smoothness: instead of working with a discrete family of functions from $C^m$, $m\in \N_0$, one uses a \emph{continuous} family of functions belonging to the H\"older spaces $C^{\alpha-1}$. The presence of the imaginary part of $z$ allows for direct utilization in complex transform techniques for signal and image analyses.  We also show that in analogue to the integer case, the generalized pseudo-splines lead to constructions of Parseval wavelet frames via the unitary extension principle. The regularity and approximation order of this new class of generalized splines is also discussed.
\vskip 6pt\noindent
\textit{MSC Classification (2010)}: 42C15, 42C40, 65D07
\vskip 6pt\noindent
\textit{Keywords}: Pseudo-splines; fractional and complex B-splines; framelets; filters; Parseval frames; unitary extension principle (UEP); approximation order
\end{abstract}
\section{Introduction}
Pseudo-splines of integer order $(m,\ell)$ were introduced in the paper \cite{DRoSh3} by Daube\-chies, Han, Ron, and Shen and studied further  in \cite{DongShen,DongShen2}. The motivation was to construct families of refinable functions which allow the interpolation between the classical B-splines and the Daubechies' scaling functions. Their focus was on the construction of  framelets for $L^2(\R)$.

The idea of this paper is to generalize the pseudo-splines to fractional and complex orders $(z, \ell)$ with $\alpha:=\re z \geq 1$. This allows increased flexibility in regard to smoothness: instead of working with a discrete family of functions from $C^{m-1}$, $m\in \N$, ones uses a \emph{continuous} family of functions belonging to the H\"older spaces $C^{\alpha-1}$. The presence of the imaginary part of $z$ allows for direct utilization in complex transform techniques for signal and image analyses. The usefulness of such complex-valued transforms is discussed in, for instance, \cite{forster14}: Real-valued transforms can only provide a symmetric spectrum and are therefore unable to separate positive and negative frequency bands. Moreover, in applications such as digital holography, real-valued transforms cannot be applied to retrieve the phase $\theta(x)$ of a complex-valued wave $b(x)e^{i\theta(x)}$. Here, complex-valued transforms, bases and frames are indispensably required.

In \cite{DongShen} it is mentioned that pseudo-splines of order $z = m \in \N$ with $\ell=0$ are B-splines. The same is true for the pseudo-splines of fractional and complex order. In fact, for real $\alpha = z = \re z \geq 1$, the pseudo-splines with parameter $\ell=0$ are the symmetric fractional $B$-splines $\beta^{2\alpha-1}_{\ast}$ defined in \cite{unserblu00}. For complex $z$ with $\re z \geq 1$ and $\ell=0$ they correspond to the complex B-splines $\beta^{2z-1}$ as they are considered in \cite{forster06}.

In this paper, we are interested in the construction of Parseval wavelet frames of the form $\{D^jT_k \psi_l : j,k\in \Z;\, l = 0,1,\ldots, n\}$, where $D$ denotes the unitary dilation operator $(D f)(x) := \sqrt{2} f(2x),$  $T_k$ the translation operator $(T_k f)(x) := f(x-k)$, $k\in \Z$, and $\{\psi_1, \dots, \psi_n\}$ is a collection of functions in $\ltr.$ That is, we want to construct the functions $\psi_l$ such that
\bes
\sum_{l=1}^n \sum_{j,k\in \mz} | \la f, D^jT_k\psi_l\ra|^2= \lVert f \rVert^2, \quad \forall f\in \ltr,
\ens
or, equivalently,
\bes f= \sum_{l=1}^n\sum_{j,k\in \mz}  \la f, D^jT_k\psi_l\ra D^jT_k\psi_l, \quad \forall f\in \ltr.
\ens

In contrast to the classical Schoenberg polynomial splines of even degree $m$, which allow the  construction of Parseval wavelet frames for $L^2(\R)$ via the unitary or oblique extension principle \cite{RoSh4}, the fractional and complex B-splines \cite{forster06,unserblu00} do not yield such frames: The trigonometric identity $1=(\sin^2 \pi\gamma + \cos^2 \pi\gamma)^m$, $\gamma\in (-\frac12,\frac12)$, which is used to construct Parseval wavelet frames in \cite{RoSh2}, is a finite sum involving powers of $\sin$ and $\cos$ whereas in the case when $z\in\C\setminus\N$, $\re z \geq 1$, the corresponding expression $1=(\sin^2 \pi\gamma + \cos^2 \pi\gamma)^z$ becomes an infinite binomial series which does not converge on all of $(-\frac12,\frac12)$. Indeed,
\[
\sum\limits_{k=0}^\infty \binom{z}{k} (\sin^2 \pi\gamma)^k\,(\cos^2 \pi\gamma)^{z-k} \;\;\text{converges for $\gamma\in (-\tfrac14,\tfrac14)$},
\]
and
\[
\sum\limits_{k=0}^\infty \binom{z}{k} (\cos^2 \pi\gamma)^k\,(\sin^2 \pi\gamma)^{z-k} \;\;\text{converges for $\gamma\in (-\tfrac12,\tfrac12)\setminus (-\tfrac14,\tfrac14)$}.
\]
This splitting of the binomial series into these two parts with disjoint regions of convergence causes the construction to fail. Here, however, we show that in analogue to the integer case, the generalized fractional and complex pseudo-splines lead to constructions of Parseval wavelet frames via the unitary extension principle.

The structure of the paper is as follows. In Section 2, we introduce pseudo-splines of fractional and complex order and show that they generate refinable functions via the cascade algorithm. Lowpass properties of complex pseudo-splines are investigated in Section 3. In Section 4, we use the unitary extension principle to construct Parseval wavelet frames from these functions. Section 5 discusses the regularity and approximation order for this new class of generalized splines.

Throughout the paper we will use the notation $\N := \{1, 2, 3, \ldots\}$, $\N_0 := \N\cup\{0\}$, $\Z_0^- := -\N_0$. The torus is $\T := [-\frac12, \frac12] \cong\R/\Z$. The floor function $\lfloor\mydot\rfloor:\R\to \Z$ is given by $r\mapsto \max\{n\in \Z : n\leq r\}$ and the ceiling function $\lceil\mydot\rceil:\R\to \Z$ is $r\mapsto \min\{n\in \Z : n\geq r\}$.
The class of 1-periodic
functions on $\R$ whose restriction to $(-\frac12, \frac12)$
belongs to $L^2(-\frac12, \frac12)$ is denoted by
$L^2({\mathbb T})$. Similarly, $L^\infty(\mt)$ consists of the
bounded measurable 1-periodic functions on $\mr$.

\section{Pseudo-Splines of Fractional and Complex Order}
In this section, we generalize the pseudo-splines of type II introduced in \cite{DRoSh3} to fractional and complex order and show that they generate refinable functions via the cascade algorithm.

To this end, we consider the filter
\be\label{eq1}
H_0 (\gamma) := H_0^{(z, \ell)}(\gamma) := (\cos^2\pi\gamma)^z\,\sum\limits_{k=0}^\ell \binom{z + \ell}{k} (\sin^2\pi\gamma)^k\,(\cos^2\pi\gamma)^{\ell-k}, \, \ga \in \mr,
\ee
where $z\in \C$ with $\alpha:= \re z \geq 1$ and $0\leq\ell\leq \lfloor \alpha - \frac12\rfloor$. The binomial coefficient is short-hand notation for
\[
\binom{z + \ell}{k} := \frac{\Gamma (z+\ell+1)}{\Gamma (k+1) \Gamma (z+\ell - k+1)},
\]
where $\Gamma: \C\setminus\Z^-_0\to\C$ denotes the Euler Gamma function. We note that since $\cos^2\pi\gamma\geq 0$, for all $\gamma\in \T$, and $\alpha \geq 1$, the complex-valued expression $(\cos^2\pi\gamma)^z$ is well-defined and holomorphic on $\C_{\geq 1} := \{\zeta\in \C : \re\zeta \geq 1\}$.

The paper \cite{DRoSh3} treats the case where $z$ and $\ell$ are nonnegative
integers with $\ell < z.$  We will show that under the above assumptions
the function $H_0$ is indeed the filter associated with a refinable function,
which generalizes the pseudo-splines to complex order.
\ml
For the remainder of this paper, the pair $(z,\ell)\in \C\times \N_0$ is assumed to be chosen according to the above restrictions and is held fixed.
\ml
Before we introduce the pseudo-splines we need to prove certain inequalities related to the filter $H_0,$ see Proposition \ref{9201}. We need the lemma below; in the integer case $z\in \N$ it is proved in \cite[Lemmata 2.1 and 2.2]{DongShen}.

\begin{lemma} \label{60116a}
Let $z\in \C_{\geq 1}$, let $\alpha := \re z$, let $\ell\in\N_0$ and define $\C$-valued polynomials $p(x) := p_{z,\ell}(x) := \sum\limits_{k=0}^\ell \binom{z+\ell}{k} x^k (1-x)^{\ell-k}$ and $q(x) := (1-x)^z p(x)$, for $x\in [0,1]$. Then $p$ can alternatively be written in the form
\be\label{p}
p(x) = \sum\limits_{k=0}^\ell \binom{z-1+k}{k} x^k,
\ee
and the derivative $q'$ of $q$ with respect to $x$ is given by
\be\label{q}
q' (x) = -(z+\ell)\binom{z-1+\ell}{\ell} x^\ell (1-x)^{z-1}.
\ee
\end{lemma}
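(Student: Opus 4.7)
I would prove (\ref{p}) first by induction on $\ell \in \N_0$, then deduce (\ref{q}) by direct differentiation using (\ref{p}). The base case $\ell = 0$ of the induction is immediate, since $p_{z,0}(x) = \binom{z}{0}(1-x)^0 = 1 = \binom{z-1}{0}$. For the inductive step, I would apply Pascal's rule $\binom{z+\ell}{k} = \binom{z+\ell-1}{k} + \binom{z+\ell-1}{k-1}$ (which persists for complex upper parameter via $\Gamma(w+1) = w\Gamma(w)$) inside the defining sum for $p_{z,\ell}$. After isolating the $k = \ell$ boundary term in the resulting first piece and shifting $k \mapsto k+1$ in the second piece, the two pieces reassemble as $(1-x)\,p_{z,\ell-1}(x) + x\,p_{z,\ell-1}(x) = p_{z,\ell-1}(x)$, plus the leftover monomial $\binom{z+\ell-1}{\ell}\,x^\ell = \binom{z-1+\ell}{\ell}\,x^\ell$. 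Substituting the inductive hypothesis closes (\ref{p}).

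For (\ref{q}), the Leibniz rule gives
\[
q'(x) = (1-x)^{z-1}\bigl[(1-x)\,p'(x) - z\,p(x)\bigr].
\]
I would then substitute (\ref{p}), expand, and collect coefficients of $x^j$ in the bracket. For $0 \leq j \leq \ell - 1$ the coefficient is $(j+1)\binom{z+j}{j+1} - (j+z)\binom{z-1+j}{j}$, and both summands simplify to $\Gamma(z+j+1)/(j!\,\Gamma(z))$ via the functional equation of $\Gamma$, so they cancel. The only surviving contribution is the coefficient of $x^\ell$, which equals $-\ell\binom{z-1+\ell}{\ell} - z\binom{z-1+\ell}{\ell} = -(z+\ell)\binom{z-1+\ell}{\ell}$, yielding (\ref{q}) after multiplying by $(1-x)^{z-1}$.

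Heuristically, if $p$ were the full power series $\sum_{k=0}^\infty \binom{z-1+k}{k} x^k = (1-x)^{-z}$, then $q \equiv 1$ and the bracket would vanish identically; the lemma records that truncating at degree $\ell$ produces a single monomial residue of degree exactly $\ell$. There is no substantive obstacle beyond careful bookkeeping — Pascal's rule and the Gamma functional equation extend the relevant binomial identities from integer to complex $z$, since every binomial coefficient appearing is a ratio of Gamma functions that remains well-defined for $\re z \geq 1$.
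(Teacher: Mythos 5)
Your proof is correct and follows essentially the same route as the paper: the paper merely cites the integer-order arguments of Dong and Shen (their Lemmata 2.1 and 2.2) after verifying that Pascal's rule and the identity $(k+1)\binom{z+k}{k+1}=(z+k)\binom{z-1+k}{k}$ persist for complex $z$ via the Gamma functional equation, and those cited arguments are precisely your induction on $\ell$ for \eqref{p} and your telescoping differentiation for \eqref{q}. You have simply written out in full the details the paper delegates to the reference.
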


\begin{proof}
The proof follows the ideas in \cite{DongShen} using the following two binomial identities that are also valid for complex $z$:
\begin{align}\label{iden}
(k+1)\binom{z+k}{k+1} & = (k+1)\,\frac{\Gamma (z+k+1)}{\Gamma (k+2)\Gamma (z)} = (k+1)\,\frac{(z+k)\Gamma (z+k)}{(k+1)\Gamma (k+1)\Gamma (z)}\nonumber\\
& = (z+k)\frac{\Gamma (z+k)}{\Gamma (k+1)\Gamma (z)} = (z+k) \binom{z-1+k}{k}
\end{align}
and Pascal's Identity
\[
\binom{z+1}{k} = \binom{z}{k} + \binom{z}{k-1}, \quad j\in \N,
\]
which is proved similar to the first identity.

The alternative representation of $p$ and the expression for the derivative of $q$ are shown as in \cite[Lemma 2.1, respectively, Lemma 2.2]{DongShen} replacing the natural integer $m$ by the complex number $z$.
\end{proof}

The following technical result will play a crucial role in the construction of
the refinable function associated with the filter $H_0,$ and also for the
construction of Parseval
frames based on pseudo-splines, see Corollary \ref{9603}.

\begin{proposition} \label{9201} Let $z\in \C_{\geq 1}$ and let $\ell = 0,1,\ldots, \lfloor\alpha-\frac12\rfloor$.
Then
\be\label{eq4}
0 < \vartheta \leq |H_0 (\gamma)|^2 + \left|H_0 \left(\gamma + \tfrac12\right)\right|^2 \leq 1, \quad\forall \gamma\in \T,
\ee
and some constant $\vartheta = \vartheta (z,\ell)$.
\end{proposition}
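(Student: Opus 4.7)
The plan is to pass from $\gamma\in\mt$ to the real variable $x=\sin^2\pi\gamma\in[0,1]$. Since $\sin^2\pi(\gamma+\tfrac12)=\cos^2\pi\gamma=1-x$ and $\cos^2\pi(\gamma+\tfrac12)=\sin^2\pi\gamma=x$, the filter satisfies $H_0(\gamma)=q(x)$ and $H_0(\gamma+\tfrac12)=q(1-x)$, where $q(x)=(1-x)^z p(x)$ is the auxiliary polynomial from Lemma~\ref{60116a}. The asserted inequality is equivalent to
\[
0<\vartheta \le F(x):=|q(x)|^2+|q(1-x)|^2 \le 1,\qquad x\in[0,1],
\]
with boundary values $F(0)=F(1)=1$ (since $q(0)=p(0)=1$ and $q(1)=0$), and the symmetry $F(x)=F(1-x)$ reducing the whole problem to $[0,\tfrac12]$.

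For the upper bound I would exploit the formula $q'(x)=-C\,x^\ell(1-x)^{z-1}$ from Lemma~\ref{60116a}, where $C:=(z+\ell)\binom{z-1+\ell}{\ell}$, together with the integral representation
\[
q(x)=C\int_x^1 t^\ell(1-t)^{z-1}\,dt
\]
obtained by integrating $q'$ from $x$ to $1$ and using $q(1)=0$. Combining $\tfrac{d}{dx}|q(x)|^2=2\re[q'(x)\overline{q(x)}]$ with $\overline{q(x)}=(1-x)^{\bar z}\overline{p(x)}$ and the identity $z-1+\bar z=2\alpha-1$ yields
\[
F'(x)=-2\,x^\ell(1-x)^{2\alpha-1}\re\!\bigl[C\,\overline{p(x)}\bigr]+2\,(1-x)^\ell x^{2\alpha-1}\re\!\bigl[C\,\overline{p(1-x)}\bigr].
\]
Symmetry forces $F'(\tfrac12)=0$, and a sign analysis on $[0,\tfrac12]$ (exploiting that $2\alpha-1-\ell\ge\alpha-\tfrac12>0$, so $x^{2\alpha-1}(1-x)^\ell< x^\ell(1-x)^{2\alpha-1}$ there) would show $F'\le 0$ on $[0,\tfrac12]$, so that $F$ attains its maximum at the endpoints, equal to $1$.

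For the lower bound, continuity of $F$ on the compact interval $[0,1]$ reduces matters to proving $F(x)>0$ pointwise. Indeed $F(x)=0$ forces both $q(x)=0$ and $q(1-x)=0$; since $(1-x)^z$ vanishes only at $x=1$ (where $F(1)=1$), this reduces to ruling out simultaneous zeros of $p$ at $x$ and at $1-x$ in $(0,1)$. Using the formula $p(x)=\frac{C}{(1-x)^z}\int_x^1 t^\ell(1-t)^{z-1}\,dt$, I would argue that $p$ does not vanish in $[0,1]$ at all, so $F$ is strictly positive everywhere and $\vartheta:=\min_{x\in[0,1]}F(x)>0$ exists.

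I expect the main obstacle to be the upper bound in the genuinely complex case: the elementary coefficient estimate $|\binom{z-1+k}{k}|\ge\binom{\alpha-1+k}{k}$ shows $|q|\ge q_\alpha$ pointwise, so one cannot reduce to the real-$z$ setting by naive domination; the sign analysis of $F'$ must instead harness the cancellation from the oscillating factor $(1-t)^{i\im z}=e^{i\im z\log(1-t)}$ inside the integral representation, rather than the straightforward monotonicity available when $z\in\R$.
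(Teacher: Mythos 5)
Your setup is exactly the paper's: the substitution $x=\sin^2\pi\gamma$, the function $F(x)=|q(x)|^2+|q(1-x)|^2$ (the paper's $s$), the boundary values $F(0)=F(1)=1$, and your displayed formula for $F'$ is precisely the paper's \eqref{sprime} after inserting \eqref{q} from Lemma \ref{60116a}. The genuine gap is that you stop at the decisive step. Your aggregated inequality $x^{2\alpha-1}(1-x)^{\ell}<x^{\ell}(1-x)^{2\alpha-1}$ on $(0,\tfrac12)$ cannot by itself yield $F'\leq 0$ there, because the two terms of $F'$ carry the distinct weights $\re\bigl[C\,\overline{p(x)}\bigr]$ and $\re\bigl[C\,\overline{p(1-x)}\bigr]$, about whose signs and relative sizes you prove nothing. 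The paper closes this by expanding \emph{termwise}: with $A_k:=(z+\ell)\binom{z-1+\ell}{\ell}\overline{\binom{z-1+k}{k}}$,
\[
s'(x)=\sum_{k=0}^{\ell}(2\re A_k)\left[(1-x)^{\ell+k}x^{2\alpha-1}-x^{\ell+k}(1-x)^{2\alpha-1}\right],
\]
and since $\ell+k\leq 2\ell\leq 2\alpha-1$ (this is exactly where the hypothesis $\ell\leq\lfloor\alpha-\tfrac12\rfloor$ enters) every bracket has one fixed sign on each side of $x=\tfrac12$; hence the stationary points are only $\{0,\tfrac12,1\}$, the minimum sits at $\tfrac12$, and the maximum $s(0)=s(1)=1$ is attained at the endpoints. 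This settles your lower bound too, in a way that makes your compactness argument and the claim that $p$ has no zeros in $[0,1]$ (unproven for $\im z\neq 0$, where $p$ has genuinely complex coefficients) unnecessary: the paper simply reads off the explicit value $\vartheta=s(\tfrac12)=2^{1-2\alpha-2\ell}\bigl\lvert\sum_{k=0}^{\ell}\binom{z+\ell}{k}\bigr\rvert^2$.

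Your closing suspicion about the complex case deserves a direct answer, because it is right in substance but your proposed remedy is a dead end. The paper's termwise argument tacitly needs $\re A_k\geq 0$ for all $k$; this is automatic for real $z=\alpha\geq 1$, and for complex $z$ it holds under an argument condition of the type \eqref{eq10a}, but it can fail for large $\lvert\im z\rvert$ --- and then no ``cancellation from the oscillating factor'' can rescue the estimate, since the inequality \eqref{eq4} itself breaks: for $z=\tfrac32+2i$, $\ell=1$ (admissible, as $\lfloor\alpha-\tfrac12\rfloor=1$) one computes at $\gamma=\tfrac14$ that $|H_0(\tfrac14)|^2+|H_0(\tfrac34)|^2=s(\tfrac12)=2^{-4}\lvert 2+z\rvert^2=\tfrac{16.25}{16}>1$. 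So the obstacle you anticipate is not one of sharper analysis but of hypotheses: a complete proof must, as the paper's does implicitly, operate in the regime where the coefficients $\re A_k$ are nonnegative, rather than hope for oscillatory cancellation in the integral representation of $q$.
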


\begin{proof}
Set $x := \sin^2\pi\gamma\in [0,1]$. With a slight abuse of the
 notation $H_0 (\gamma)$ can then be written in the form
\[
H_0 (x)  = (1-x)^z p(x),
\]
where $p(x) := p_\ell(x) := \sum\limits_{k=0}^\ell \binom{z+\ell}{k} x^k (1-x)^{\ell-k}.$
Let $q(x) := (1-x)^z p(x)$ be as in Lemma \ref{60116a}.
To establish the validity of the inequalities, \eqref{eq4} is now equivalent to showing that
\[
s(x) := q(x)\overline{q(x)} + q(1-x)\overline{q(1-x)}
\]
is bounded above by one and below by some $\vartheta > 0$. To this end, consider the derivative of $s$:
\be\label{sprime}
s' (x) = q' (x)\overline{q(x)} + q(x) \overline{q'(x)} - q' (1-x)\overline{q(1-x)} - q(1-x) \overline{q'(1-x)}.
\ee
Using the expression for $q(x)$ and its derivative \eqref{q}, we obtain the following formulas for the four products in \eqref{sprime}:
\begin{align*}
q' (x)\overline{q(x)} &= -(z+\ell)\,\binom{z-1+\ell}{\ell}\,\sum\limits_{k=0}^\ell \overline{\binom{z-1+k}{k}} x^{\ell+k} (1-x)^{2\alpha -1},\\
q(x) \overline{q'(x)} &= \overline{q' (x)\overline{q(x)}},\\
q' (1-x)\overline{q(1-x)} & = -(z+\ell)\,\binom{z-1+\ell}{\ell}\,\sum\limits_{k=0}^\ell \overline{\binom{z-1+k}{k}} (1-x)^{\ell+k} x^{2\alpha -1},\\
q(1-x) \overline{q'(1-x)} & = \overline{q' (1-x)\overline{q(1-x)}}.
\end{align*}
Setting $A_k := A_k(z, \ell) := (z+\ell)\,\binom{z-1+\ell}{\ell} \overline{\binom{z-1+k}{k}}$, yields
\begin{align*}
q' (x)\overline{q(x)} + q(x) \overline{q'(x)} = - \sum\limits_{k=0}^\ell (2 \re A_k)\, x^{\ell+k} (1-x)^{2\alpha -1}
\end{align*}
and, similarly,
\begin{align*}
q' (1-x)\overline{q(1-x)} + q(1-x) \overline{q'(1-x)} = - \sum\limits_{k=0}^\ell (2 \re A_k)\, (1-x)^{\ell+k} x^{2\alpha -1}.
\end{align*}
Therefore,
\begin{align*}
s'(x) &= \sum\limits_{k=0}^\ell (2 \re A_k)\,\left[(1-x)^{\ell+k} x^{2\alpha -1} - x^{\ell+k} (1-x)^{2\alpha -1}\right].
\end{align*}
Hence, the only stationary points of $s$ are given by $x \in \{0, \frac12, 1\}$. Setting $x := \frac12+\veps$, $0 < \veps < \frac12$, the expression in brackets produces three cases.
\ml
Case I: If $\ell+k < 2\alpha - 1$ then
\[
(\tfrac12 - \veps)^{\ell+k}(\tfrac12 + \veps)^{\ell+k}\left[(\tfrac12 + \veps)^{2\alpha - 1 - \ell-k}-(\tfrac12 - \veps)^{2\alpha-1-\ell-k}\right]\begin{cases} > 0, & \veps > 0;\\ < 0, & \veps < 0.
\end{cases}
\]
Case II: If $\ell+k > 2\alpha - 1$ then
\[
(\tfrac12 - \veps)^{2\alpha - 1}(\tfrac12 + \veps)^{2\alpha - 1}\left[(\tfrac12 - \veps)^{\ell + k - 2\alpha + 1}-(\tfrac12 + \veps)^{\ell + k - 2\alpha + 1}\right]\begin{cases} < 0, & \veps > 0;\\ > 0, & \veps < 0.
\end{cases}
\]
Case III: If $\ell+k = 2\alpha - 1$ then there are only finitely many integers $k\geq 0$ and $0\leq k\leq \ell$ for which $(1-x)^{\ell+k} x^{2\alpha -1} - x^{\ell+k} (1-x)^{2\alpha -1}$ is identically equal to zero. The value of $s'$ is therefore determined by Cases I and II.

Thus, the stationary point $x=\frac12$ yields the unique minimum for $s$ only if $0\leq k \leq \ell \leq \lfloor\alpha - \frac12\rfloor\leq \alpha -\frac12$ holds.

In particular, $s(0) = s(1) = 1$ and
\begin{align*}
s \left(\frac12\right) &= 2 q\left(\frac12\right) \overline{q\left(\frac12\right)}\\
& = 2 \left(\frac12\right)^z \sum\limits_{k=0}^\ell \binom{z+\ell}{k} \left(\frac12\right)^\ell\cdot \overline{\left(\frac12\right)^z \sum\limits_{k=0}^\ell \binom{z+\ell}{k} \left(\frac12\right)^\ell}\\
& = 2^{1-2\alpha-2\ell} \left\lvert\sum\limits_{k=0}^\ell \binom{z+\ell}{k}\right\rvert^2.
\end{align*}
This proves the result with $\vartheta := \vartheta (z, \ell) := s(\frac12)$.
\end{proof}

\begin{figure}[h!]
\centering
\includegraphics[height = 4cm, width = 6cm]{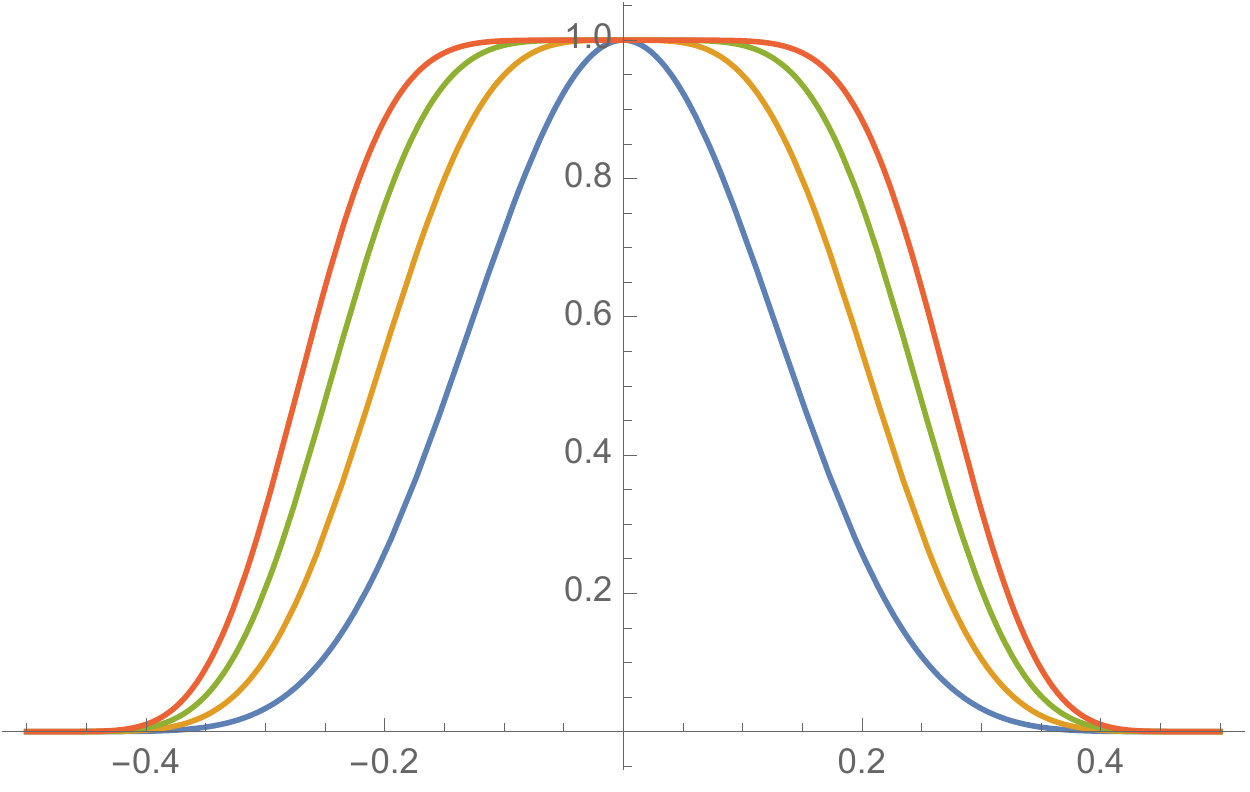}\\[2ex]
\includegraphics[height = 4cm, width = 6cm]{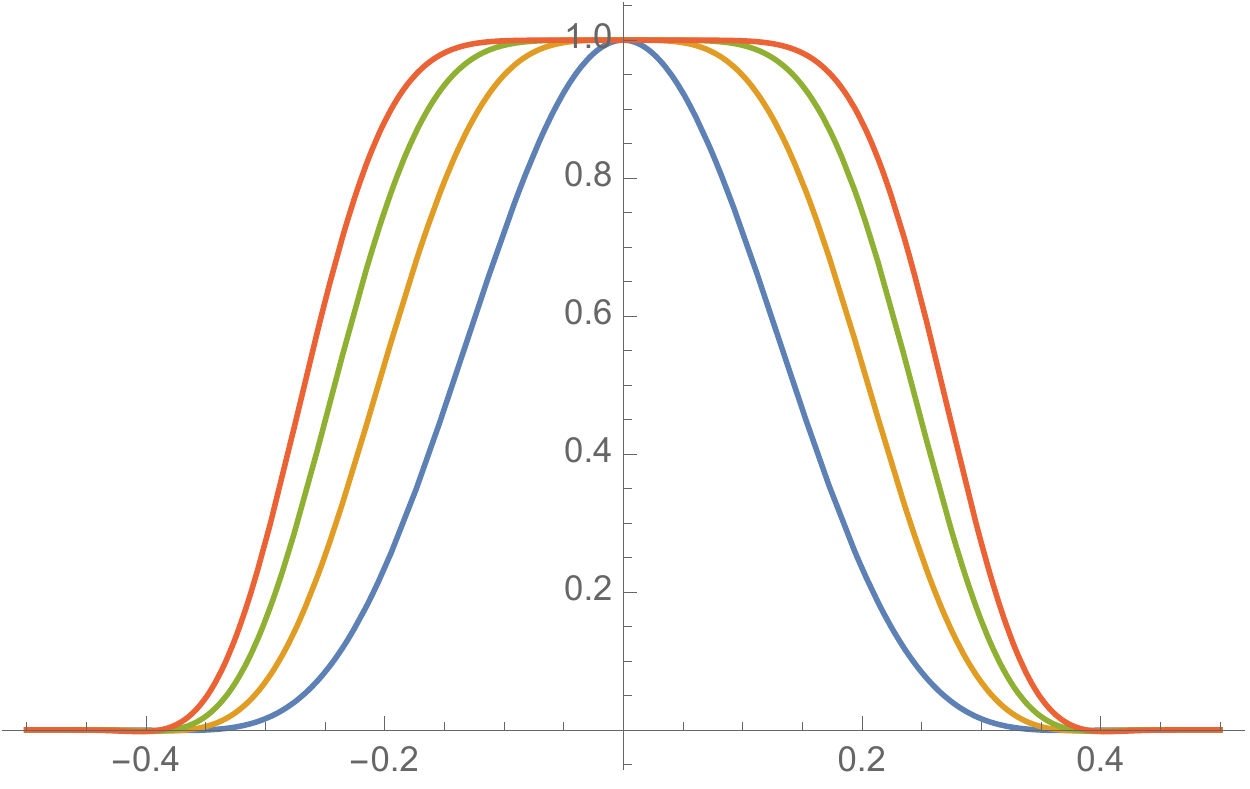}
\includegraphics[height = 4cm, width = 6cm]{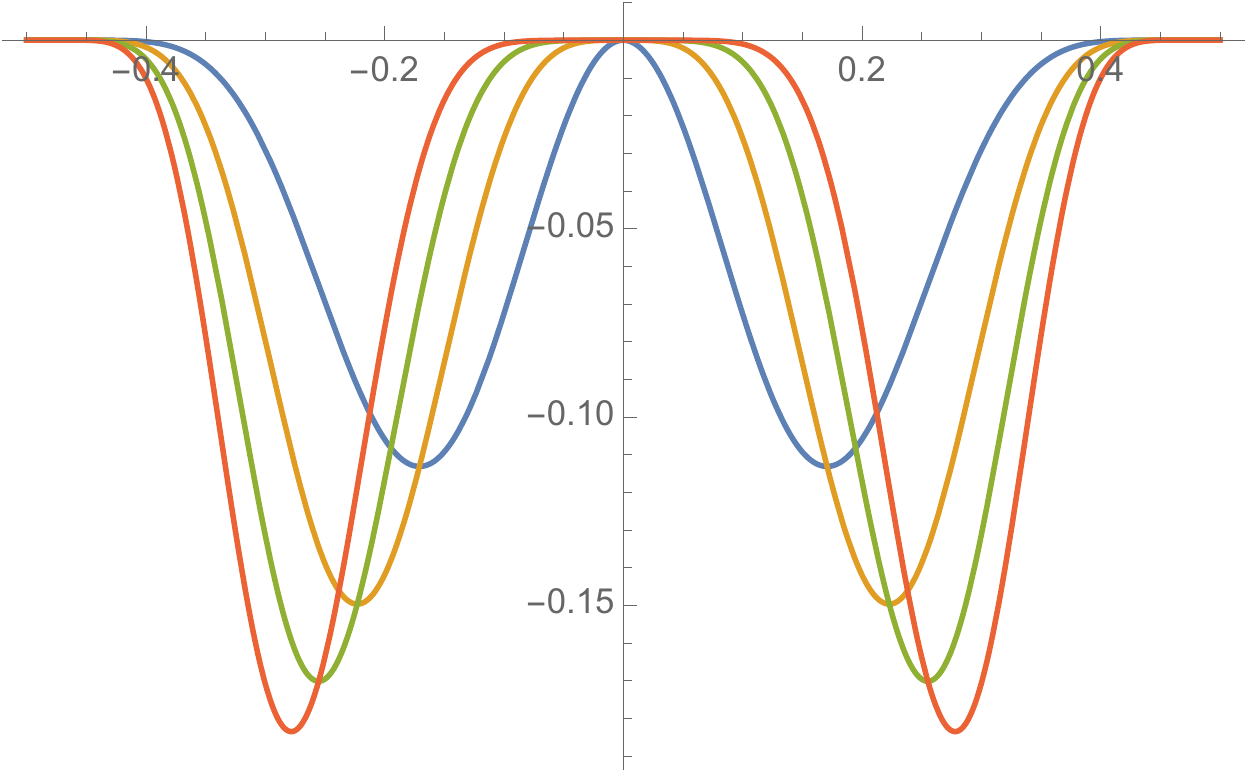}
\caption{The filter $H^{(3.2+i,\ell)}$ for $\ell= 0,1,2,3$ (inner to outer curves). The upper image shows $|H^{(3.2+ i,\ell)}|$, the lower left image $\re H^{(3.2+ i,\ell)}$, and the lower right image $\im H^{(3.2+ i,\ell)}$. The pseudospline parameter $\ell$ allows for a tuning of the width of the scaling filter. Notice that the imaginary part acts as an added bandpass filter.}
\end{figure}

In Theorem \ref{60131a} we will show that there indeed exists a refinable function $\varphi \in \ltr$ associated with the  filter $H_{0}\in L^{2} (\T)$ in \eqref{eq1}, i.e., a function such that for its Fourier transform holds:
\bee \label{60131b}
\widehat{\varphi}(\gamma) = H_0\left(\frac{\gamma}{2}\right) \widehat{\varphi}\left(\frac{\gamma}{2}\right),\qquad \gamma\in\R.
\ene
Here, we consider the Fourier transform with the normalization
\[
\widehat{\varphi}(\gamma) = \int_{\R} \varphi(t) e^{-2 \pi i t \gamma} \, dt,\quad \gamma \in \R.
\]

As in classical wavelet analysis we identify an appropriate function $\varphi$ via the cascade algorithm. Note that
iteration of \eqref{60131b}  formally yields that
\begin{equation}
\widehat{\varphi}(\gamma) = \prod\limits_{m=1}^\infty H_{0}(2^{-m}\gamma) \widehat{\varphi}(0), \quad \ga \in \mr.
\label{eq infinite product}
\end{equation}
Motivated by this we define the functions $\varphi_m, \, m\in\N_0,$ via

\begin{align}
\widehat{\varphi}_{0}(\gamma) & := \chi_{[-\frac{1}{2},\frac{1}{2}]}(\gamma),
\nonumber\\
\widehat{\varphi}_{m}(\gamma) & := \chi_{[-2^{m-1}, 2^{m-1}]} (\gamma) \prod\limits_{j=1}^{m} H_{0}(2^{-j}\gamma).
\label{eq Cascade Algorithm}
\end{align}

We will show that $\widehat{\varphi}_m$ converges to a function
in $\ltr$, to be denoted by $\widehat{\varphi},$ and that the inverse Fourier
transform $\varphi$ satisfies \eqref{60131b}. In order to do so, we need
the following general result, which  exists in many variants  in the literature, see e.g. \cite{DongShen2,lmr-englisch,woj97}.
It describes the exact interpretation of the convergence
of the infinite product in \eqref{eq infinite product}. Note that in this result
$H_0$ denotes \emph{any periodic} function, not necessarily the filter in \eqref{eq1}; in
contrast to the versions in the literature we do not assume that  $H_0$ is a trigonometric polynomial.

\begin{lemma}
\label{L L2 Konvergenz infinite product}
Let $H_{0}(\gamma)$ be a $1$-periodic real or complex function satisfying the following  conditions:

\begin{itemize}
\item [1)] $H_{0}(0)=1$.
\item [2)]  There exist  a constant $C>0$ and an exponent $\varepsilon > 0$ such that
$$|H_{0}(\gamma) - 1| \leq C \cdot |\gamma|^{\varepsilon}, \quad \mbox{for all } \gamma\in\R.$$
\item[3)] There exists a positive constant $\vartheta$ such that $$0 < \vartheta \leq |H_{0}(\gamma)|^{2} + |H_{0}(\gamma+\tfrac{1}{2})|^{2} \leq 1, \quad \mbox{for all } \gamma\in\R.  $$ 
\end{itemize}
Then the sequence $\widehat{\varphi}_{m}, \, m\in \mn$, converges pointwise and uniformly on compact subsets. The pointwise limit $\widehat{\varphi}$ belongs to $L^{2}(\R)$ and
$$
\varphi_{m} \to \varphi \quad \mbox{in\ } L^{2}(\R), \ \mbox{as } m\to \infty.
$$ Furthermore, the function $\varphi$ satisfies the refinement equation \eqref{60131b}.
\end{lemma}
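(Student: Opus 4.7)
The plan is to verify the three assertions—pointwise/uniform-on-compacts convergence, membership in $L^2(\R)$ with $L^2$-convergence, and the refinement equation—in turn, following the usual strategy for Mallat-type cascade proofs: hypothesis (2) drives the infinite-product convergence, hypothesis (3) furnishes the $L^2$-estimate through a periodicity-based splitting, and the refinement identity is obtained by passing to the limit in a simple recursion.

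First, hypothesis (3) yields immediately $|H_0(\gamma)| \leq 1$ for every $\gamma$ (drop the non-negative summand $|H_0(\gamma+\tfrac12)|^2$). Combined with hypothesis (2), I obtain $|H_0(2^{-j}\gamma)-1| \leq C\,2^{-j\veps}|\gamma|^\veps$, a bound that is summable in $j$ and uniformly so on compact sets in $\gamma$. The elementary inequality $|\prod h_j - 1|\le \sum|h_j-1|$, valid whenever each $|h_j|\le 1$, then shows that the partial products $F_m(\gamma):=\prod_{j=1}^m H_0(2^{-j}\gamma)$ form a uniformly Cauchy family on compacts. Call the limit $\widehat{\varphi}(\gamma)$. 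For every fixed $\gamma$ one eventually has $|\gamma|\le 2^{m-1}$, so the characteristic factor in $\widehat{\varphi}_m$ becomes trivial, yielding $\widehat{\varphi}_m(\gamma)\to\widehat{\varphi}(\gamma)$ pointwise and uniformly on compacts.

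Second, to obtain the $L^2$-bound I would derive and iterate the key identity
\[
\|\widehat{\varphi}_m\|_{L^2}^2 = \int_{-2^{m-2}}^{2^{m-2}}\Big[|H_0(2^{-m}\gamma)|^2 + |H_0(2^{-m}\gamma+\tfrac12)|^2\Big]\,|F_{m-1}(\gamma)|^2\,d\gamma,
\]
which follows by splitting the integration region $[-2^{m-1},2^{m-1}]$ into $[-2^{m-2},2^{m-2}]$ and its complement and exploiting the fact that $F_{m-1}$ is $2^{m-1}$-periodic (this uses only the $1$-periodicity of $H_0$): a shift by $\pm 2^{m-1}$ on the outer pieces converts $H_0(2^{-m}\gamma)$ into $H_0(2^{-m}\gamma+\tfrac12)$. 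The upper bound in hypothesis (3) then gives the key monotonicity $\|\widehat{\varphi}_m\|_{L^2}^2\le \|\widehat{\varphi}_{m-1}\|_{L^2}^2$, so $\|\widehat{\varphi}_m\|_{L^2}\le\|\widehat{\varphi}_0\|_{L^2}=1$. Fatou's lemma applied to the pointwise limit from Step 1 produces $\widehat{\varphi}\in L^2(\R)$.

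The main technical obstacle will be the strong $L^2$-convergence $\widehat{\varphi}_m\to\widehat{\varphi}$. My plan is to deduce it from pointwise a.e. convergence together with norm convergence $\|\widehat{\varphi}_m\|_{L^2}\to\|\widehat{\varphi}\|_{L^2}$ via the Radon--Riesz property of Hilbert space. The monotone decreasing sequence $\|\widehat{\varphi}_m\|_{L^2}^2$ has a limit $L\ge\|\widehat{\varphi}\|_{L^2}^2$; proving the reverse inequality is the delicate point. One reduces to it by splitting $\R$ into a compact set $|\gamma|\le R$, on which uniform convergence from Step~1 plus dominated convergence handles the contribution, and a tail region, where the iterated identity from Step~2 together with the pointwise bound $|F_m|\searrow|\widehat{\varphi}|$ and monotone convergence shows that no mass escapes to infinity. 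This is exactly the kind of argument worked out in the references \cite{DongShen2,lmr-englisch,woj97}, which I would invoke in a slightly adapted form since $H_0$ is not assumed to be a trigonometric polynomial. Finally, the refinement equation \eqref{60131b} is obtained by passing to the limit in the elementary identity
\[
\widehat{\varphi}_m(\gamma) = H_0(\gamma/2)\,\widehat{\varphi}_{m-1}(\gamma/2),
\]
which holds because $|\gamma|\le 2^{m-1}$ is equivalent to $|\gamma/2|\le 2^{m-2}$; the pointwise limit $\widehat{\varphi}(\gamma)=H_0(\gamma/2)\widehat{\varphi}(\gamma/2)$ is precisely the claim, and the $L^2$-convergence in the previous step upgrades the pointwise identity to an identity in $L^2(\R)$.
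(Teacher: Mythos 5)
Your Steps 1 and 2, and the final limiting argument for the refinement equation \eqref{60131b}, are correct; Step 1 is in fact more self-contained than the paper, which simply quotes \cite{DongShen2,lmr-englisch} for the locally uniform convergence, whereas your telescoping estimate $\bigl\lvert\prod_j h_j-1\bigr\rvert\le\sum_j\lvert h_j-1\rvert$ (legitimate since condition 3) forces $|H_0|\le1$) fills that in directly. Your Step 2 identity is, up to splitting $[-2^{m-1},2^{m-1}]$ symmetrically rather than at $0$, exactly the paper's periodization computation, and the Fatou argument agrees verbatim.

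The gap is in Step 3, and it is not a technicality. Conditions 1)--3), the Step 2 identity, and the pointwise monotonicity $|F_m|\searrow|\widehat{\varphi}|$ of the partial products do \emph{not} imply that no mass escapes to infinity. Consider the ``stretched Haar'' filter $H_0(\gamma)=\tfrac12\bigl(1+e^{-6\pi i\gamma}\bigr)=e^{-3\pi i\gamma}\cos(3\pi\gamma)$: it satisfies 1), satisfies 2) with $\veps=1$ since $|H_0(\gamma)-1|=|\sin(3\pi\gamma)|\le 3\pi|\gamma|$, and satisfies 3) with $\vartheta=1$ because $|H_0(\gamma)|^2+|H_0(\gamma+\tfrac12)|^2\equiv1$; your Step 2 identity then gives $\Vert\widehat{\varphi}_m\Vert_2=1$ for every $m$, yet the pointwise limit is $\widehat{\varphi}(\gamma)=e^{-3\pi i\gamma}\sin(3\pi\gamma)/(3\pi\gamma)$, i.e.\ $\varphi=\tfrac13\chi_{[0,3]}$ with $\Vert\widehat{\varphi}\Vert_2^2=\tfrac13$, so two thirds of the mass does escape to the tails and $L^2$-convergence fails. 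Hence the norm convergence $\Vert\widehat{\varphi}_m\Vert_2\to\Vert\widehat{\varphi}\Vert_2$ feeding your Radon--Riesz step cannot be derived from the ingredients you list, and your proposed tail estimate via monotone convergence would have to fail somewhere. The missing idea --- which the paper's proof names explicitly --- is Cohen's criterion: $H_0(2^{-j}\gamma)\neq0$ for all $j\ge1$ and all $\gamma$ in a compact set congruent to $[-\tfrac12,\tfrac12]$ modulo $1$ (it fails for the filter above, which vanishes at $\gamma=\tfrac16$). The paper handles the delicate step by invoking the proof of \cite[Theorem 2.4.7]{lmr-englisch}, for which Cohen's criterion is the essential hypothesis, arguing via the reverse triangle inequality $|H_0(\gamma)|\ge1-C|\gamma|^{\veps}$ from condition 2) --- an appeal that itself requires the nonvanishing to extend over $[-\tfrac14,\tfrac14]$, which is clear for the concrete pseudo-spline filter \eqref{eq1}. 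To repair your proposal you must add and verify this nondegeneracy of $H_0$ away from the origin; without it, the ``delicate point'' you flag is not merely delicate but false in the stated generality.
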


\begin{proof}  It is well-known (see \cite{DongShen2,lmr-englisch}) that the assumptions 1) and 2)
imply that the sequence $\widehat{\varphi}_{m}, \, m\in \mn$, converges pointwise and uniformly on compact subsets.
We now show that the sequence  $\Vert\widehat{\varphi}_{m}\Vert_2$, $m\in\N_{0}$, is bounded, and that the sequence of functions $\{\widehat{\varphi}_{m}\}$ converges to $\widehat{\varphi} \in L^{2}(\R)$ as $m\to\infty$.
Plancherel's theorem then implies convergence in the time domain.

We start by splitting up $\widehat{\varphi}_{m+1}$ into a first factor consisting of the scaled filter $H_{0}$ and a second factor---a $2^{m}$-periodic filter product:

\begin{align*}
\Vert \widehat{\varphi}_{m+1}\Vert_{2}^{2} & = \int_{-2^{m}}^{2^{m}} |H_{0}(2^{-m-1}\gamma)|^{2} \left| \prod\limits_{j=1}^{m} H_{0}(2^{-j}\gamma)\right|^{2}\, d\gamma\\
& = \int_{0}^{2^{m}} \left(|H_{0}(2^{-m-1}\gamma)|^{2} + |H_{0}(2^{-m-1}(\gamma- 2^{m}))|^{2}\right)\left| \prod\limits_{j=1}^{m} H_{0}(2^{-j}\gamma)\right|^{2}\, d\gamma\\
& \leq \int_{0}^{2^{m}} \left| \prod\limits_{j=1}^{m} H_{0}(2^{-j}\gamma)\right|^{2}\, d\gamma
\quad \leq \quad \Vert \widehat{\varphi}_{m}\Vert_{2}^{2}.
\end{align*}

Since $\Vert \widehat{\varphi}_{0}\Vert_{2} = 1$, we find that the sequence of norms $\Vert \widehat{\varphi}_{m}\Vert_2$, $m \in \N_{0}$ is bounded by $1$. The pointwise convergence  together with the positivity of the squares $|\widehat{\varphi}_{m}(\gamma)|^{2}$ for all $m \in \N_{0}$ yield via Fatou's Lemma that the limit function $\widehat{\varphi}$ is square-integrable:
$$
\Vert \widehat{\varphi}\Vert_{2}^{2}\; \leq\; \limsup_{m\to\infty} \Vert \widehat{\varphi}_{m}\Vert_{2}^{2}\; \leq\; 1.
$$
It remains to prove the convergence of $\widehat{\varphi}_{m}$, $m\in\N_{0}$ to $\widehat{\varphi}$ in $L^{2}(\mr)$. This, however, follows directly from the proof of  \cite[Theorem 2.4.7]{lmr-englisch} and the comments given therein. Note that Cohen's criterion, which is required in the proof, is satisfied because of condition 2) and the inverse triangular inequality.
\end{proof}

\begin{thm} \label{60131a}
The filter in \eqref{eq1}, i.e.,
\[
H_0 (\gamma) := H_0^{(z, \ell)}(\gamma) := (\cos^2\pi\gamma)^z\,\sum\limits_{k=0}^\ell \binom{z + \ell}{k} (\sin^2\pi\gamma)^k\,(\cos^2\pi\gamma)^{\ell-k},
\]
where $z\in \C$ with $\alpha:= \re z \geq 1$ and $\ell = 0, 1, \ldots\lfloor\alpha-\frac12\rfloor$, generates a refinable function
$\varphi$ via the cascade algorithm \eqref{eq Cascade Algorithm}.
\end{thm}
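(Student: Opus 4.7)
The plan is to apply Lemma \ref{L L2 Konvergenz infinite product} to the specific filter $H_0 = H_0^{(z,\ell)}$ of \eqref{eq1}. Condition 3) of that lemma is exactly the content of Proposition \ref{9201}, which has already been established, so the real work reduces to checking conditions 1) and 2).

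Condition 1), namely $H_0(0) = 1$, is immediate from the definition: at $\gamma = 0$ one has $\cos^2\pi\gamma = 1$ and $\sin^2\pi\gamma = 0$, so (with the convention $0^0 = 1$) only the $k=0$ summand survives and $H_0(0) = 1^z \cdot \binom{z+\ell}{0}\cdot 1\cdot 1 = 1$.

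Condition 2) requires the H\"older-type bound $|H_0(\gamma) - 1| \leq C|\gamma|^\varepsilon$ for all $\gamma \in \R$. I would verify this by a direct smoothness argument. On a small interval $(-\delta,\delta)$ the quantity $\cos^2\pi\gamma$ stays bounded away from $0$, so the principal branch $(\cos^2\pi\gamma)^z = \exp(z\log\cos^2\pi\gamma)$ is smooth in $\gamma$, and the polynomial factor $\sum_{k=0}^\ell \binom{z+\ell}{k}(\sin^2\pi\gamma)^k(\cos^2\pi\gamma)^{\ell-k}$ is smooth everywhere. Hence $H_0$ is $C^1$ on $(-\delta,\delta)$ with $H_0(0) = 1$, yielding a local Lipschitz estimate $|H_0(\gamma)-1|\leq C_1|\gamma|$ there. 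Outside this interval, Proposition \ref{9201} supplies $|H_0(\gamma)| \leq 1$, so $|H_0(\gamma)-1| \leq 2 \leq (2/\delta)|\gamma|$. Setting $\varepsilon := 1$ and $C := \max\{C_1, 2/\delta\}$ then delivers condition 2) globally on $\R$.

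With all three hypotheses in place, Lemma \ref{L L2 Konvergenz infinite product} yields everything at once: the cascade iterates $\widehat{\varphi}_m$ defined in \eqref{eq Cascade Algorithm} converge pointwise, uniformly on compact sets, and in $L^2(\R)$ to some $\widehat{\varphi}$, and the inverse Fourier transform $\varphi$ satisfies the refinement equation \eqref{60131b}. I do not anticipate a genuine obstacle here; the entire structural difficulty of the theorem has already been absorbed into Proposition \ref{9201}, and the verification of conditions 1) and 2) is essentially a routine consequence of the definition of $H_0$ together with the smoothness of $\cos^2\pi\gamma$ near the origin.
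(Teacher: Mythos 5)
Your proposal is correct and shares the paper's overall skeleton---reduce the theorem to Lemma \ref{L L2 Konvergenz infinite product}, with condition 1) checked by direct evaluation and condition 3) supplied by Proposition \ref{9201}---but your verification of condition 2) takes a genuinely different route. The paper substitutes $x = \sin^2\pi\gamma$ and computes explicitly: for $\ell = 0$ it expands $(1-x)^z = 1 - zx + x^2 S(x)$ via the binomial series, for general $\ell$ it writes the polynomial factor as $1 + (z+\ell)x + x^2 P(x)$, and multiplying yields $H_0(x) = 1 + \ell x + x^2 J(x)$ with $J$ bounded, whence $|H_0(x)-1| \leq C|x|$; since $\sin^2\pi\gamma \leq \pi^2\gamma^2$ for all $\gamma\in\R$, this gives the global bound directly (in fact with exponent $\varepsilon = 2$ in $\gamma$). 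You instead argue softly: $(\cos^2\pi\gamma)^z = \exp(z\log\cos^2\pi\gamma)$ is smooth where $\cos^2\pi\gamma$ is bounded away from zero, so $H_0$ is locally Lipschitz at the origin, and away from the origin you combine $|H_0(\gamma)| \leq 1$ (a consequence of Proposition \ref{9201} and periodicity) with $|\gamma| \geq \delta$ to absorb the crude bound $|H_0(\gamma)-1|\leq 2$ into $(2/\delta)|\gamma|$. One minor point of rigor: the mean value theorem does not hold for complex-valued functions, but your local Lipschitz estimate survives via $|H_0(\gamma)-H_0(0)| = \bigl\lvert \int_0^\gamma H_0'(t)\,dt \bigr\rvert \leq \sup_{|t|\leq\delta}|H_0'(t)|\,|\gamma|$, so the step stands. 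Your argument is shorter and more robust---it applies to any $1$-periodic filter that is $C^1$ near $0$ and bounded by $1$, with no series manipulation. What the paper's explicit expansion buys is reusability: the representation $H_0^{(z,\ell)}(x) = 1 + c\,x + x^2 M(x)$ is invoked again verbatim later in the paper when treating the shifted filters $H_0^{(z,\ell,u)}$, where it is multiplied against the Taylor expansion of the phase factor $e^{-2\pi i u\gamma}$ to obtain the H\"older bound with exponent $\tfrac12$ in $x$; your soft argument would also cover that case, but the paper's computation makes the dependence on $z$, $\ell$, and the shift explicit.
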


\begin{proof} We only need to  check that $H_0$ satisfies the conditions in Lemma \ref{L L2 Konvergenz infinite product}.
Obviously, $H_{0}$ is defined for all $\gamma \in \R$ and all $z \in \C_{\geq 1}$. $H_{0}$ is continuous in both those variables. Moreover,
\begin{eqnarray}
H_{0}(0) = (\cos^{2}(0))^{z} \sum\limits_{k=0}^{\ell} \binom{z+\ell}{k} (\cos^{2}(0))^{\ell-k} \sin^{2}(0)^{k} =1.
\nonumber
\end{eqnarray}
In order to validate the second  criterion, we use again the substitution $x = \sin^{2} \pi \gamma \in [0,1]$ and consider first the case $\ell = 0$. A simple computation gives
\begin{align*}
H_0 (x) & = (1-x)^z = 1 - z x + x^2 \sum_{k=0}^\infty (-1)^k \binom{z}{k+2} x^k =: 1 - zx + x^2 S(x),
\end{align*}
where $S$ is a bounded function on $[0,1]$. Hence, there exist a constant $C > 0$ such that for all $x \in [0,1]$ the following estimate holds:
\[
|H_{0}(x)-1| \leq C |x|.
\]
For general $\ell\in \N$ we express the finite sum in $H_0 (x)$ in the following form:
\begin{align*}
\sum\limits_{k=0}^{\ell} \binom{z+\ell}{k} (1-x)^{\ell-k} x^{k} & = 1+ (z+\ell) (1-x)^{\ell-1} x + \sum\limits_{k=2}^{\ell} \binom{z+\ell}{k} (1-x)^{\ell-k} x^{k}\\
& =: 1 + (z+\ell) x + x^2 P(x),
\end{align*}
where $P$ is an algebraic polynomial of degree at most $\ell$ on $[0,1]$ and thus bounded. Hence,
\begin{align*}
H_{0}(x) & =  (1-x)^{z} \sum\limits_{k=0}^{\ell} \binom{z+\ell}{k} (1-x)^{\ell-k} x^{k}\\
& = \left[1 - zx + x^2 S(x)\right] \left[1 + (z+\ell) x + x^2 P(x)\right]\\
& =: 1+ \ell x + x^{2} J(x),
\end{align*}
where $J$ is a bounded function on $[0,1]$. Hence, for all $x \in [0,1]$ there exists a constant $C>0$ such that
\[
|H_{0}(x)-1| \leq C |x|.
\]
Thus, criterion $2)$ is fulfilled for the filter $H_{0}$ and as a consequence of Lemma \ref{L L2 Konvergenz infinite product}, the infinite product \eqref{eq infinite product} converges pointwise and uniformly on compact sets. Finally, condition 3) is satisfied by Proposition \ref{9201}.
\end{proof}

\begin{rem}
Comparing the formal limit $\widehat\varphi$ of the cascade algorithm with the formal infinite product \eqref{60131b}, we make the choice $\widehat{\varphi}(0) := 1$.
\end{rem}

Following the terminology in \cite{DRoSh3}, we call  the function $\varphi$ constructed in Theorem \ref{60131a}  a {\em pseudo-spline of complex order $(z, \ell)$} or for short a \emph{complex pseudo-spline}.

\ml
\begin{rem}
If $z:=m\in \N$ in \eqref{eq1} our definition corresponds to the definition of pseudo-spline of type II as given in \cite{DRoSh3}. In \cite[Remark 3]{Zhuang} it was claimed that the concept of pseudo-spline can be extended to fractional powers by considering $\cos^r\pi\gamma$ instead of $\cos^{2m}\pi\gamma$, where $r\in \R$ with $r\geq 2m\in \N$. This, however, is not true since for negative values of the $\cos$-function, the expression $\cos^r\pi\gamma = (\cos\pi\gamma)^r$ is no longer well-defined. For this reason, it is essential to consider $H_0$ as a function in $\cos^2$ and $\sin^2$.
\end{rem}

\begin{figure}[h!]
\centering
\includegraphics[height = 4cm, width = 6cm]{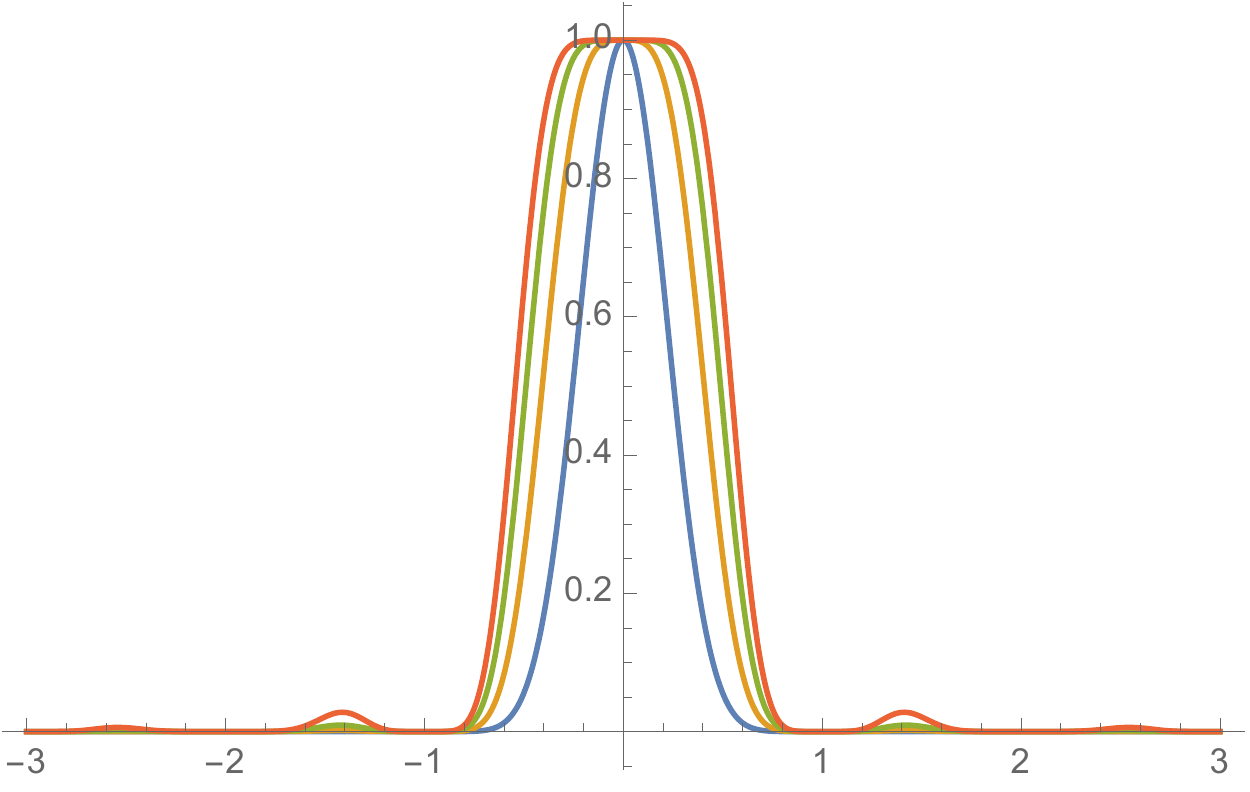}\\[2ex]
\includegraphics[height = 4cm, width = 6cm]{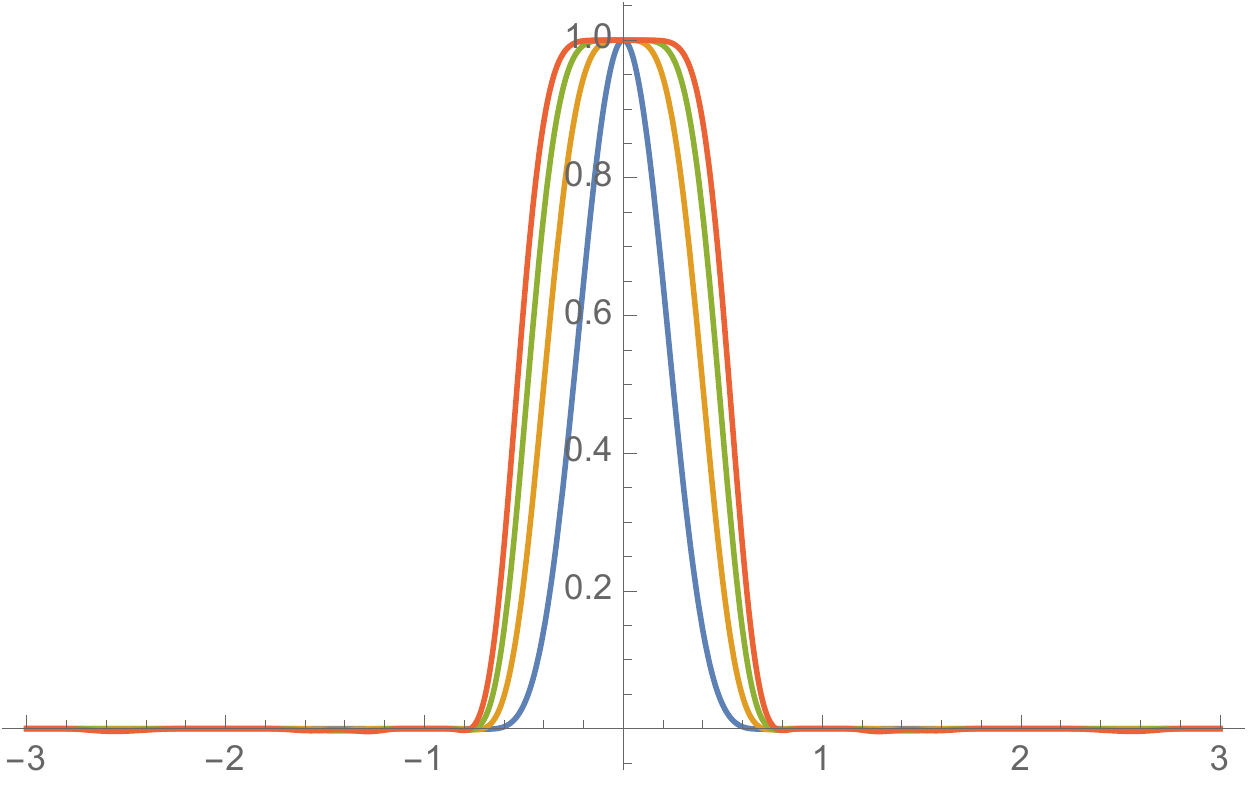}
\includegraphics[height = 4cm, width = 6cm]{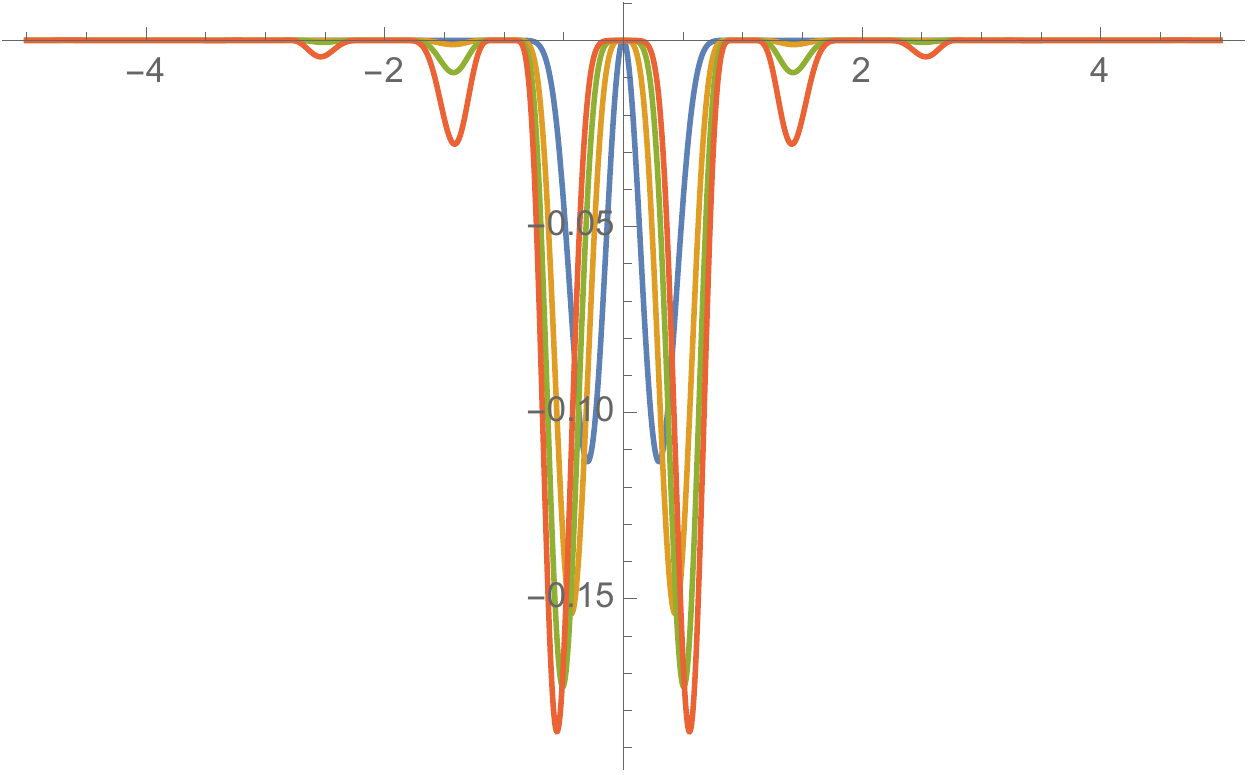}
\caption{The pseudosplines in Fourier domain  $\widehat{\varphi}^{(3.2 + i,\ell)}$ for $\ell= 0, 1,2,3$ (inner to outer curves). The upper image shows $|\widehat{\varphi}^{(3.2 + i,\ell)}|$, the lower left image $\re \widehat{\varphi}^{(3.2 + i,\ell)}$, and the lower right image $\im \widehat{\varphi}^{(3.2 + i,\ell)}$. The pseudospline parameter $\ell$ allows for a tuning of the width of the lowpass property of the refinable function.}
\end{figure}

\begin{figure}[h!]
\centering
\includegraphics[width = 0.3\textwidth]{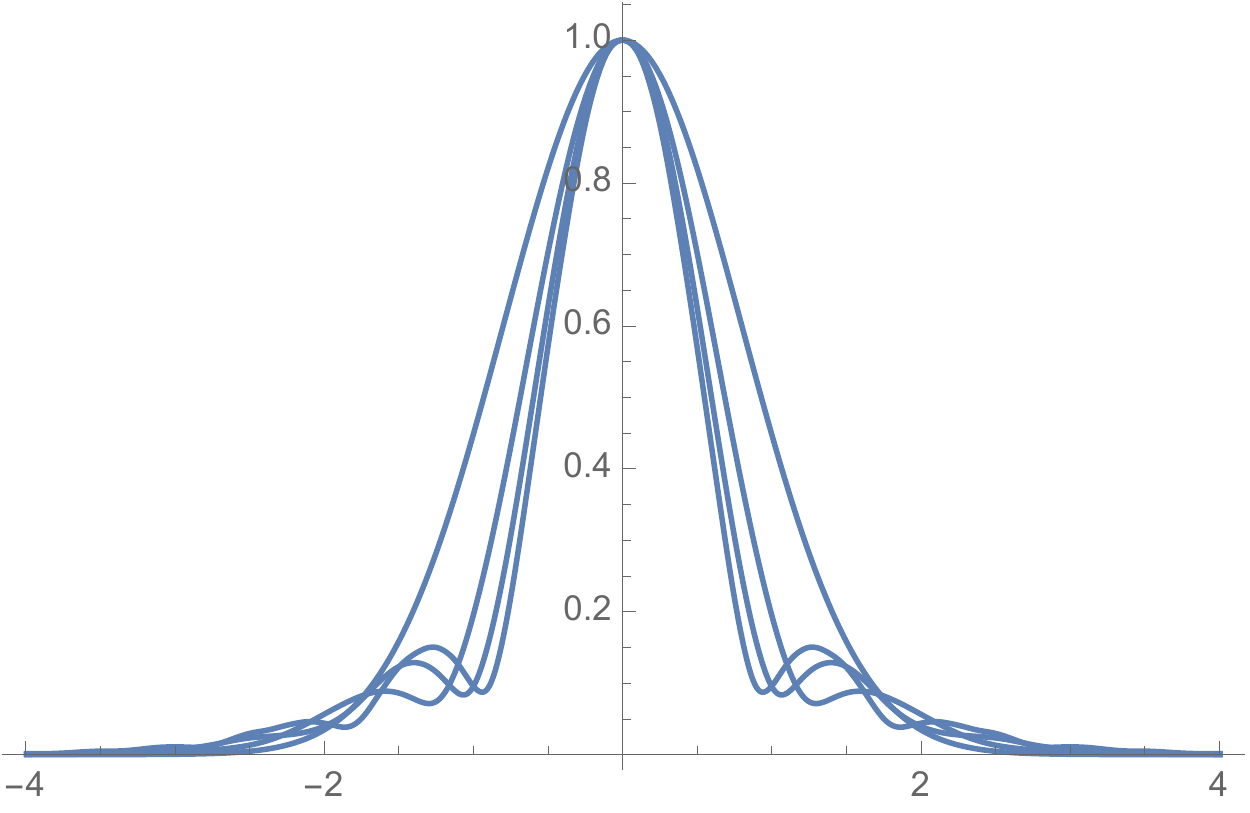}

(a)

\bigskip

\includegraphics[width = 0.3\textwidth]{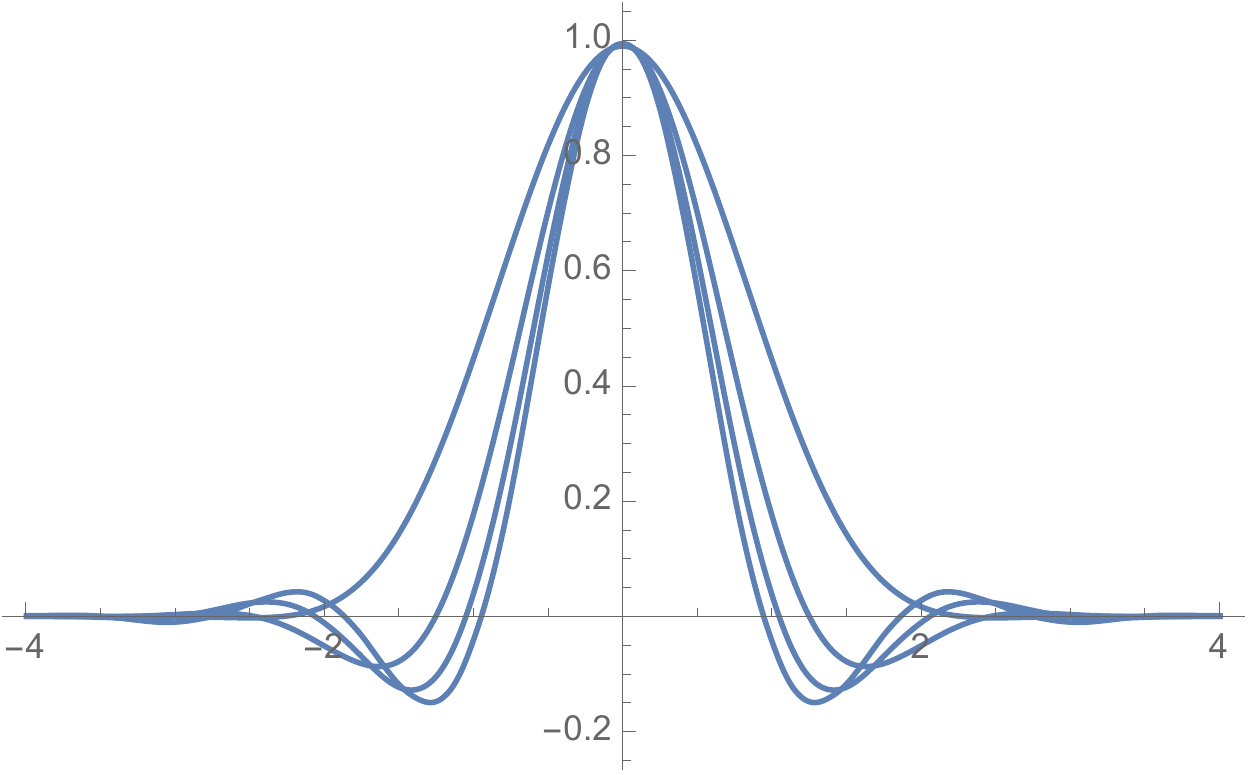}\hspace{2cm}
\includegraphics[width = 0.3\textwidth]{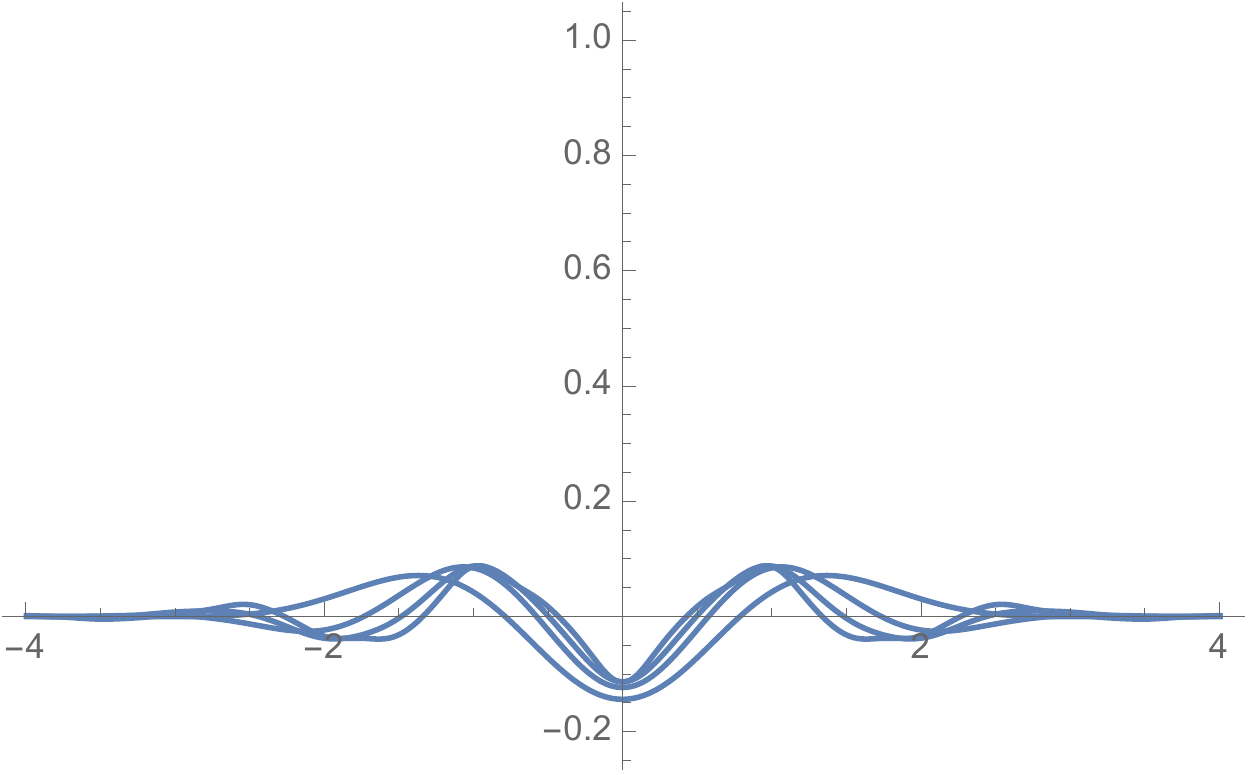}

(b) \hspace{145pt} (c)

\label{fig Pseudosplines zeitbereich}
\caption{Pseudo-splines ((a) modulus, (b) real and (c) imaginary part) in time domain. Parameters $z = 3.2 + i$, $\ell = 0,1,2,3$.}
\end{figure}
%
%
%
%
%
%
%
%
%
%

As a variant to the construction above, one can also consider complex pseudo-splines with a fixed shift $w\in \C$ (cf. \cite{forster06}), namely,
\begin{align*}
&H_{0}^{(z,\ell,w)}(\gamma)\\
& := \frac{1}{2^{2z}}(1+e^{2\pi i \gamma})^{z-w} (1+e^{-2\pi i\gamma})^{z+w} \sum\limits_{k=0}^\ell \binom{z + \ell}{k} (\sin^2\pi\gamma)^k\,(\cos^2\pi\gamma)^{\ell-k},
\end{align*}
for $z\in\C_{\geq 1}$ and $0 \leq \ell \leq \lfloor \re z -\frac12 \rfloor$, in order to define fractional and complex pseudo-splines. In the case where $\ell:=0$ it was shown in \cite{luisier} that the shift allows for better adaption to the signal or image.

A straightforward computation shows that
\be
H_{0}^{(z,\ell,w)}(\gamma) = e^{-2\pi i w \gamma}\,H_0^{z,\ell} (\gamma), \quad\gamma\in \T.
\ee
Setting $w := u + i v$, $u,v\in \R$, one quickly verifies that \eqref{eq4} holds true only if $v =0$. Hence $H_{0}^{(z,\ell,w)}$ is  related to $H_{0}^{(z,\ell)}$ via a pure phase factor:
\[
H_{0}^{(z,\ell,u)}(\gamma) = e^{-2\pi i u \gamma}\,H_0^{z,\ell} (\gamma), \quad\gamma\in \T.
\]
Clearly, $H_{0}^{(z,\ell,u)}(\gamma)$ is $1$-periodic function and satisfies $H_{0}^{(z,\ell,u)}(0) = 1$. Using the substitution $x = \sin^2\pi\gamma$, expressing the phase factor $e^{-2\pi i u \gamma}$ as a function of $x$, and writing out its Taylor expansion around $x=0$, one obtains
\[
e^{-2\pi i u \gamma} = e^{-2 i u \arcsin\sqrt{x}} = 1 - 2 i u \sqrt{x} - 2 u^2 x + \sqrt{x}\, F(x),
\]
where $F$ is a bounded function on $[0,1]$. As $H_0^{(z,\ell)}$ can be written in the form $H_0^{(z,\ell)} (x) = 1 + c \,x + x^2 M(x)$ for some $c\in \C$ and a bounded function $M$ on [0,1] (see above), we obtain for $H_{0}^{(z,\ell,u)}(x)$ the expression
\begin{align*}
H_{0}^{(z,\ell,u)}(x) & = \left[1 - 2 i u \sqrt{x} - 2 u^2 x + \sqrt{x} F(x)\right] \left[1 + c \,x + x^2 M(x)\right]\\
& = 1 + (F(x) - 2i u)\sqrt{x} + x\, G(x),
\end{align*}
for some function $G$ bounded on $[0,1]$. Hence, there exists a positive constant $C$ so that
\[
|H_{0}^{(z,\ell,u)}(x) - 1| \leq C |x|^\frac12.
\]
Consequently, Proposition \ref{9201} also applies to the shifted filters $H_{0}^{(z,\ell,u)}$ and by Theorem \ref{60131a} the shifted filters $H_{0}^{(z,\ell,u)}$ generate refinable functions via the cascade algorithm.
\section{Lowpass Properties of Complex Pseudo-Splines}
We first consider the properties of the complex pseudo-splines in the Fourier domain.
Because of the lower inequality in condition 3) of Lemma  \ref{L L2 Konvergenz infinite product}, we have  that $0<\vartheta\leq |H_{0}(\gamma)|^{2} \leq 1$, for all $\gamma \in\R$. As a consequence, the pseudo-spline $\varphi$ is bounded above by one in the Fourier domain, i.e., $|\widehat{\varphi}(\gamma)| \leq 1$, for all $\gamma \in \R$.

As expected for a refinable function, the pseudo-splines act as lowpass filters. In fact, there exists a neighborhood of the origin, where $\widehat{\varphi}$ does not vanish. To see this, we need some preparations.
\begin{lemma}\label{60129c}
Let $n\in\N$ and let $\{z_j := x_j + y_j : j = 0,1,\ldots, n\}$ be a set of complex numbers. Then
\[
\Arg \prod_{j=0}^n z_j = \sum\limits_{j=0}^n \tan^{-1}\frac{y_j}{x_j},
\]
where $\Arg$ denotes the principal argument of a complex number and has values in $(-\pi,\pi]$.  If, in addition, each $z_j$ is of the form $z_j := x+j + i y$, for given $(x,y)\in\R$ with $x > 1$, then
\[
\tan^{-1} \frac{y}{x+j} \in \left(-\frac\pi2,\frac\pi2\right),\quad\forall\,j=0,1,\ldots,n,
\]
and
\be\label{eq9}
\left\vert\tan^{-1} \frac{y}{x+j}\right\vert \leq \left\vert\tan^{-1} \frac{y}{x+j-1}\right\vert,\quad\forall\,j=1,\ldots,n.
\ee
\end{lemma}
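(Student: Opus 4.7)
The plan is to split the argument into its two independent parts and handle each separately.

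\textbf{Part 1 (product-argument formula).} I would begin by invoking the fact that the principal argument is additive modulo $2\pi$: for any two nonzero complex numbers $a,b$ one has $\Arg(ab)\equiv \Arg a+\Arg b\pmod{2\pi}$. A straightforward induction on $n$ then yields $\Arg\prod_{j=0}^n z_j \equiv \sum_{j=0}^n \Arg z_j \pmod{2\pi}$. Under the (implicit) assumption that $x_j>0$ for each $j$ -- which holds in the intended application, where the $z_j$ are factors arising from the filter $H_0$ -- one has $\Arg z_j = \tan^{-1}(y_j/x_j)\in(-\tfrac{\pi}{2},\tfrac{\pi}{2})$. The claimed equality then follows provided the total sum lies in $(-\pi,\pi]$, in which case the congruence sharpens to a genuine equality. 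I would simply note that this last condition must be verified in the setting where the lemma is applied (it is in the subsequent proof of the lowpass property, because $x>1$ controls the size of each summand).

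\textbf{Part 2 (range of the individual $\tan^{-1}$).} This is immediate from $x>1$: for every $j\in\{0,\dots,n\}$ we have $x+j\geq x>1>0$, so $y/(x+j)\in\R$, and $\tan^{-1}$ maps $\R$ into $(-\tfrac{\pi}{2},\tfrac{\pi}{2})$.

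\textbf{Part 3 (monotonicity).} For $j\geq 1$, both $x+j$ and $x+j-1$ are strictly positive (again since $x>1$), so $y/(x+j)$ and $y/(x+j-1)$ have the same sign, and from $0<x+j-1<x+j$ one reads off $|y/(x+j)|\leq|y/(x+j-1)|$. Since $\tan^{-1}$ is odd and strictly increasing on $\R$, the map $t\mapsto|\tan^{-1}t|$ is nondecreasing in $|t|$, which yields \eqref{eq9}.

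The only delicate point is the first part: one needs the sum of the individual principal arguments to lie in $(-\pi,\pi]$ for the $\bmod\,2\pi$ identity to be a genuine equality. I expect this to be the main obstacle and to rely on an implicit hypothesis coming from the application (such as a bound on $|y|$ relative to $x$ forcing each $|\tan^{-1}(y_j/x_j)|$ to be small enough that $(n+1)$ of them cannot accumulate past $\pi$). The remaining two statements are purely elementary and require no serious computation.
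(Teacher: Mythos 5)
Your argument is correct, and it follows the same basic idea as the paper's, but it is considerably more careful: the paper's entire proof of this lemma is the single sentence ``Write each $z_j$ in the form $z_j = |z_j|\,e^{i\theta_j}$ with $\theta_j := \tan^{-1}\frac{y_j}{x_j}$,'' which silently assumes both that $x_j>0$ (so that the principal argument of each factor equals the arctangent) and that the resulting angles sum within the principal range; parts 2 and 3 are not addressed at all and are indeed the elementary observations you record. The caveat you isolate in Part 1 is a genuine defect of the statement as written: the identity holds only modulo $2\pi$ in general (e.g., for five copies of $1+i$ the arctangents sum to $\tfrac{5\pi}{4}$ while the principal argument of the product is $-\tfrac{3\pi}{4}$), so an extra hypothesis of exactly the kind you name is needed. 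Your expectation that the application supplies it is also accurate: in the proof of the lowpass proposition the lemma is applied to $\prod_{j=0}^{k-1}(z+j)$ with $x\geq 1$, where every summand $\tan^{-1}\frac{y}{x+j}$ has the sign of $y$, so the partial sums are monotone in $k$, and hypothesis \eqref{eq10a} together with \eqref{eq9} then confines all partial sums to $\left(-\frac{\pi}{2},\frac{\pi}{2}\right)\subset(-\pi,\pi]$, turning the congruence into a true equality. In short, your write-up is not a different route but a completed version of the paper's one-line argument, supplying the positivity assumption and the range verification that the published proof omits.
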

\begin{proof}
Write each $z_j$ in the form $z_j = |z_j|\,e^{i\theta_j}$ with $\theta_j := \tan^{-1}\frac{y_j}{x_j}$.
\end{proof}

Set $H_{0}^{(z,\ell)}(\gamma) := H_{0}^{(z,0)}(\gamma) P^{(z,\ell)}(\gamma)$, where
\be\label{Pzl}
P^{(z,\ell)}(\gamma) := \sum\limits_{k=0}^\ell \binom{z + \ell}{k} (\sin^2\pi\gamma)^k\,(\cos^2\pi\gamma)^{\ell-k}.
\ee

\bpr
Suppose $z := x + i y$, $x\geq 1$, is such that
\be\label{eq10a}
\sum\limits_{j=0}^\ell \tan^{-1}\frac{y}{x+j} \in \left(-\frac\pi2,\frac\pi2\right)
\ee
holds. Then there exists a positive constant $c>0$ that bounds $\widehat{\varphi}$ from below in a neighborhood of the origin, i.e.,
\begin{equation}
0 < c \leq |\widehat{\varphi}^{(z,0)}(\gamma)| \leq |\widehat{\varphi}^{(z,\ell)}(\gamma)|.
\label{eq fractionelle splines sind positiv}
\end{equation}
\epr

\begin{proof} The result in  \eqref{eq fractionelle splines sind positiv} clearly holds for all those tuples $(z,\ell)$, for which
$$
0 < \widetilde{c} \leq |H_{0}^{(z,0)}(\gamma)| \leq   |H_{0}^{(z,0)}(\gamma) P^{(z,\ell)}(\gamma)|  =|H_{0}^{(z,\ell)}(\gamma)|
$$
holds for $\gamma$ in a neighborhood of the origin. Here, the third inequality holds whenever $|P^{(z,\ell)}(\gamma)| \geq 1.$
For fractional $z = \alpha \geq 1$, this is obviously true. In fact, consider the representation as an algebraic polynomial. Then
$$
P^{(\alpha, \ell)}(x) = 1 + \sum\limits_{k=1}^{\ell}\binom{\alpha-1+k}{k} x^{k} \geq 1 \quad \mbox{for } x \in [0,1],
$$
because of the positivity of the sum. For complex $z$ we need more sophisticated estimates. Consider
\begin{eqnarray*}
|P^{(z, \ell)}(x)|^{2} & = & \left( \sum\limits_{k=0}^{\ell}\re \binom{z-1+k}{k}x^{k}\right)^{2} +\left( \sum\limits_{k=0}^{\ell}\im \binom{z-1+k}{k}x^{k}\right)^{2}
\nonumber\\
& = & \left(1 + \sum\limits_{k=1}^{\ell}\re \binom{z-1+k}{k}x^{k}\right)^{2} +\left( \sum\limits_{k=1}^{\ell}\im \binom{z-1+k}{k}x^{k}\right)^{2}
\\
& \geq & 1 + 2\sum\limits_{k=1}^{\ell}\re \binom{z-1+k}{k}x^{k} + \left(\sum\limits_{k=1}^{\ell}\re \binom{z-1+k}{k}x^{k}\right)^{2}.
\end{eqnarray*}
Lemma \ref{60129c} allows us to estimate the binomial terms.

Consider a fixed $z := x +i y$ with $x\geq 1$. Then
\[
\re\binom{z-1+k}{k} = \frac{\re\prod\limits_{j=0}^{k-1} (z+j)}{k!}.
\]
By \eqref{eq9}, it suffices to require that
\[
\sum\limits_{j=0}^\ell \tan^{-1}\frac{y}{x+j} \in \left(-\frac\pi2,\frac\pi2\right)
\]
to guarantee that $\re\binom{z-1+k}{k} \geq 0$ for all $k=1, \ldots, \ell$.

Thus, for all $z\in \C_{\geq 1}$ satisfying \eqref{eq10a} the polynomial $|P^{(z,\ell)}(\gamma)| \geq 1$ and \eqref{eq fractionelle splines sind positiv} holds.
\end{proof}

Note that for $\ell = 0$, condition \eqref{eq10a} is satisfied as expected since $H^{(z,0)}$ is the filter of the classical fractional or complex B-spline.

\begin{rem} The refinable functions generated by the shifted filters $H_{0}^{(z,\ell,u)}$ also satisfy Proposition \ref{60129c} since they differ from the filters $H_{0}^{(z,\ell)}$ only by the phase factor $e^{-2\pi i u \gamma}$.
\end{rem}
\section{Construction of Parseval Wavelet Frames} \label{2803a}
In this section we will use the unitary extension principle (UEP) by Ron and Shen \cite{RoSh2}
to construct Parseval frames, generated by fractional or complex pseudo-splines. This
generalizes some of the results in \cite{DRoSh3}, where frame constructions based on
the ``integer pseudo-splines''  were presented.  Since we only want to illustrate
the use of pseudo-splines of complex order we do not aim at the most general version
of the result but confine ourself to a construction with three generators.

\bpr \label{9603}
Consider the filter $H_0$ in  \eqref{eq1} and the associated refinable function
$\varphi$ in \eqref{eq infinite product}. Furthermore, let
\bee \label{9403}
\eta(\ga):= 1- \left(|H_0(\ga)|^2+ |H_0(\ga + \tfrac12)|^2\right).
\ene
Let $\sigma$ be a $1$-periodic function such that $|\sigma(\ga)|^2= \eta(\ga),$ and define the filters $\{H_n\}_{n=1}^3$ by \bes \label{9303}
H_1(\ga)= e^{2\pi i \ga}\overline{H_0(\ga+ \tfrac12)}, \ \
H_2(\ga)=\frac1{\sqrt{2}}\sigma(\ga), \, \,  H_3(\ga)=\frac1{\sqrt{2}}\, e^{2\pi i \ga}\sigma(\ga).
\ens
Then the functions $\{\psi_n\}_{n=1}^3$ given by
\bee \label{2002}
\widehat{\psi_n}(2\ga)=H_n(\ga)\widehat{\varphi}(\ga)
\ene
generate a Parseval frame $\{D^jT_k\psi_{n}\}_{j,k\in \mz, n=1,
2,3}$ for $\ltr$. \epr

\bp We have already seen that
$$
\widehat{\varphi} (\gamma) = \prod\limits_{m>0} H_{0}(2^{-m}\gamma) = H_{0}(\gamma/2) \widehat{\varphi} (\gamma/2).
$$ Note that the formulation of Proposition \ref{9603} corresponds
precisely to the setup on page 21 in \cite{DRoSh3}, except that the filter
$H_0$ in general is not a trigonometric polynomial;  it is easy to see
that the arguments in \cite{DRoSh3} still go through.
According to the general setup in the unitary extension principle
(see again page 21 in  \cite{DRoSh3} or Cor. 18.5.2 in \cite{C}  with $\Theta=1$) we only need to check
 that \bei \item[(i)]  $\lim_{\ga\to 0}\widehat{\varphi}(\ga)=1.$
 \item[(ii)] The function $\eta$ in \eqref{9403} is nonnegative.\eni
The condition (i) is clearly satisfied, and (ii) follows from Proposition \ref{9201}.  Thus the
conclusion follows.
\ep

Note that if we  expand the filters $H_n, n=1,2,3$ in  Fourier series, $H_n(\ga)= \sukz c_{k,n}e^{2\pi ik\ga},$
the functions $\psi_n, n=1,2,3,$ are given explicitly by
\[
\psi_n(x)= \sqrt{2}\sukz c_{k,n}DT_{-k}\varphi (x)= 2 \sukz c_{k,n}\varphi (2x+k).
\]
\begin{rem}
Proposition \ref{9603} also holds for the shifted filters $H_0^{(z,\ell,u)}$ and the refinable functions generated by them. Consequently, we also obtain Parseval wavelets frames from these refinable functions.
\end{rem}

\begin{figure}[h!]
\centering
\flushleft{$\ell = 0:$}\\
\includegraphics[width = 0.3\textwidth]{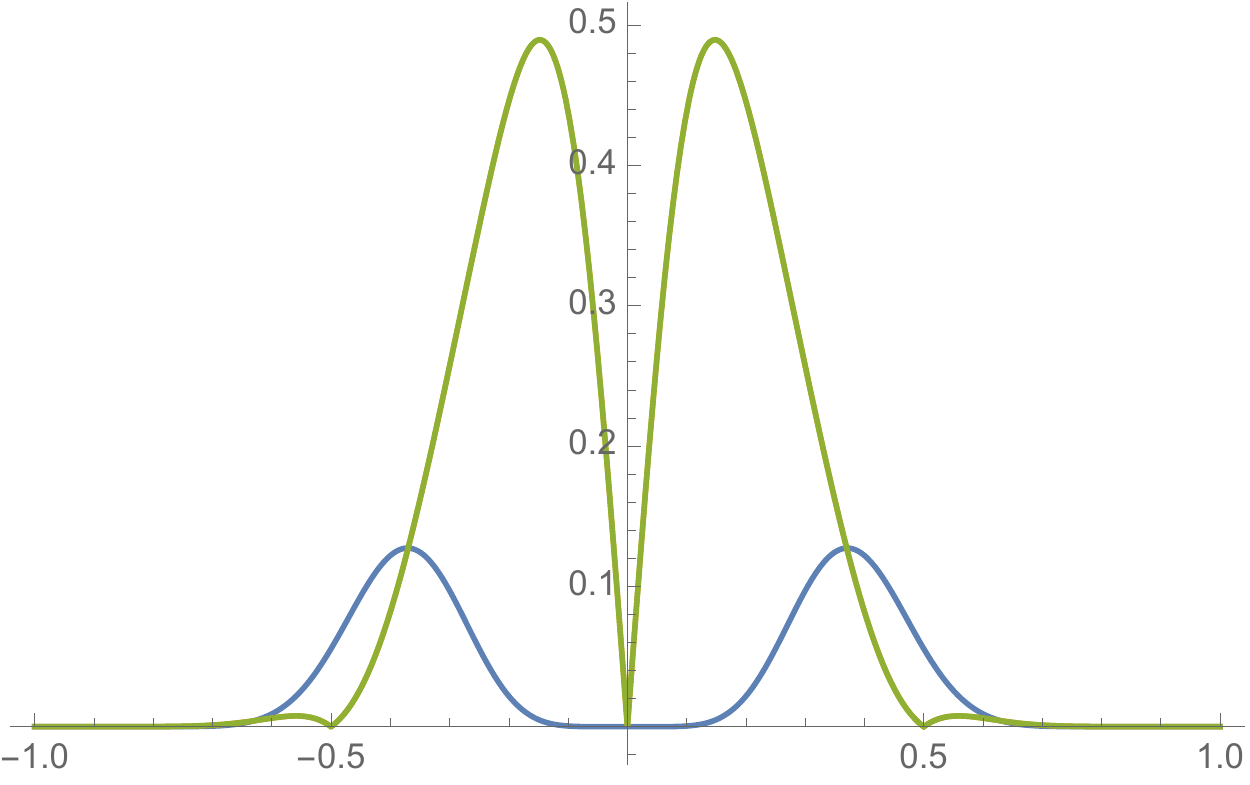}
\includegraphics[width = 0.3\textwidth]{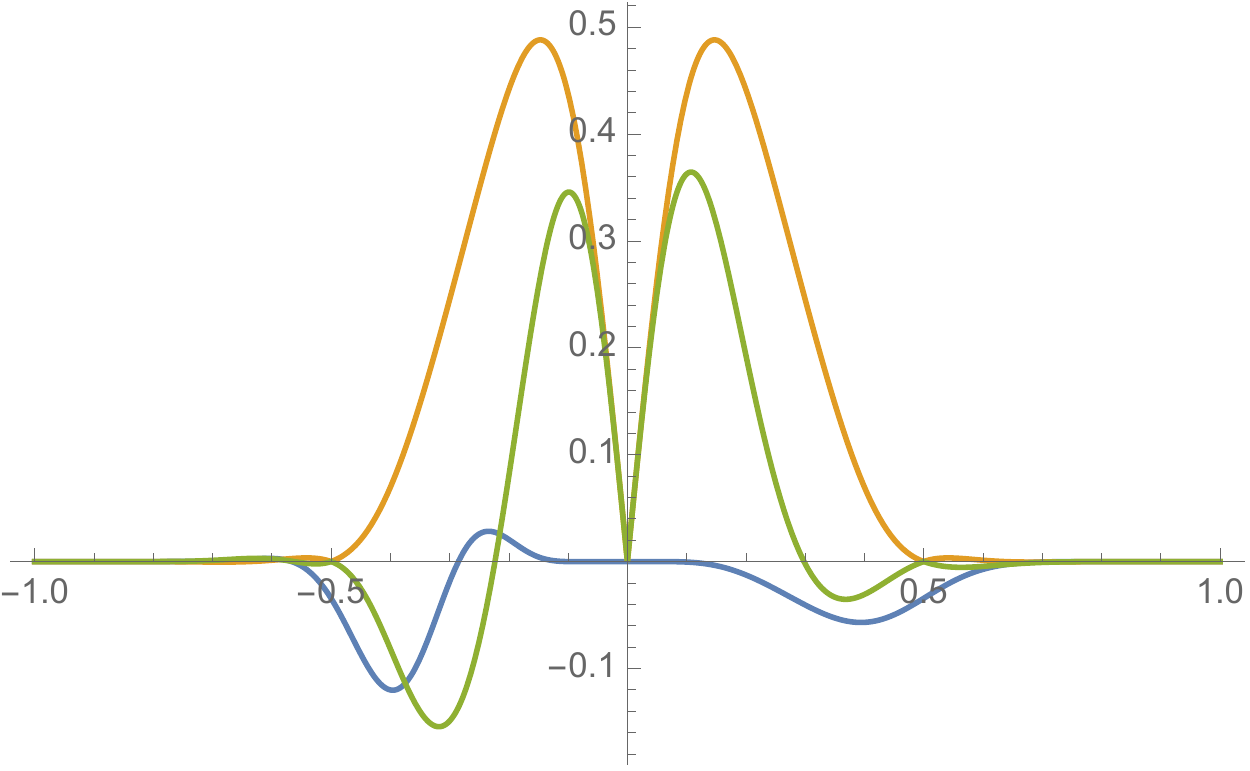}
\includegraphics[width = 0.3\textwidth]{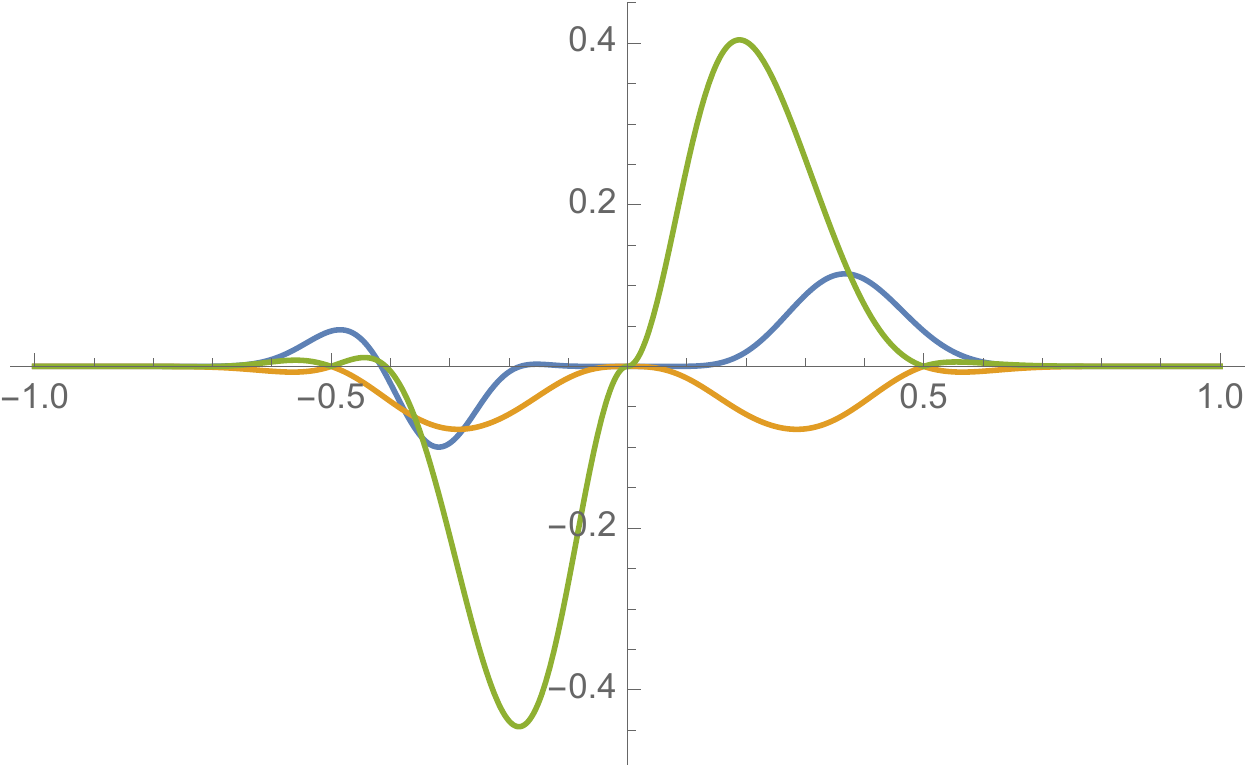}

\flushleft{$\ell = 1:$}\\
\includegraphics[width = 0.3\textwidth]{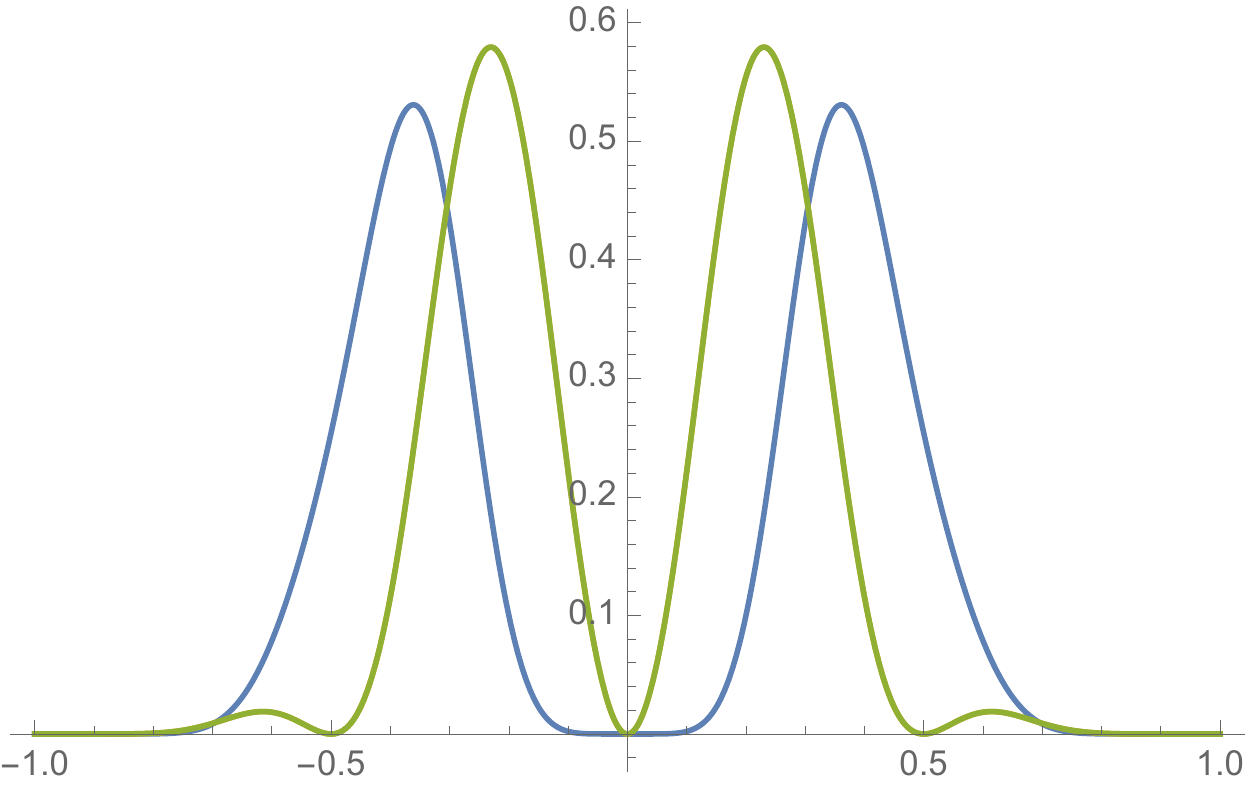}
\includegraphics[width = 0.3\textwidth]{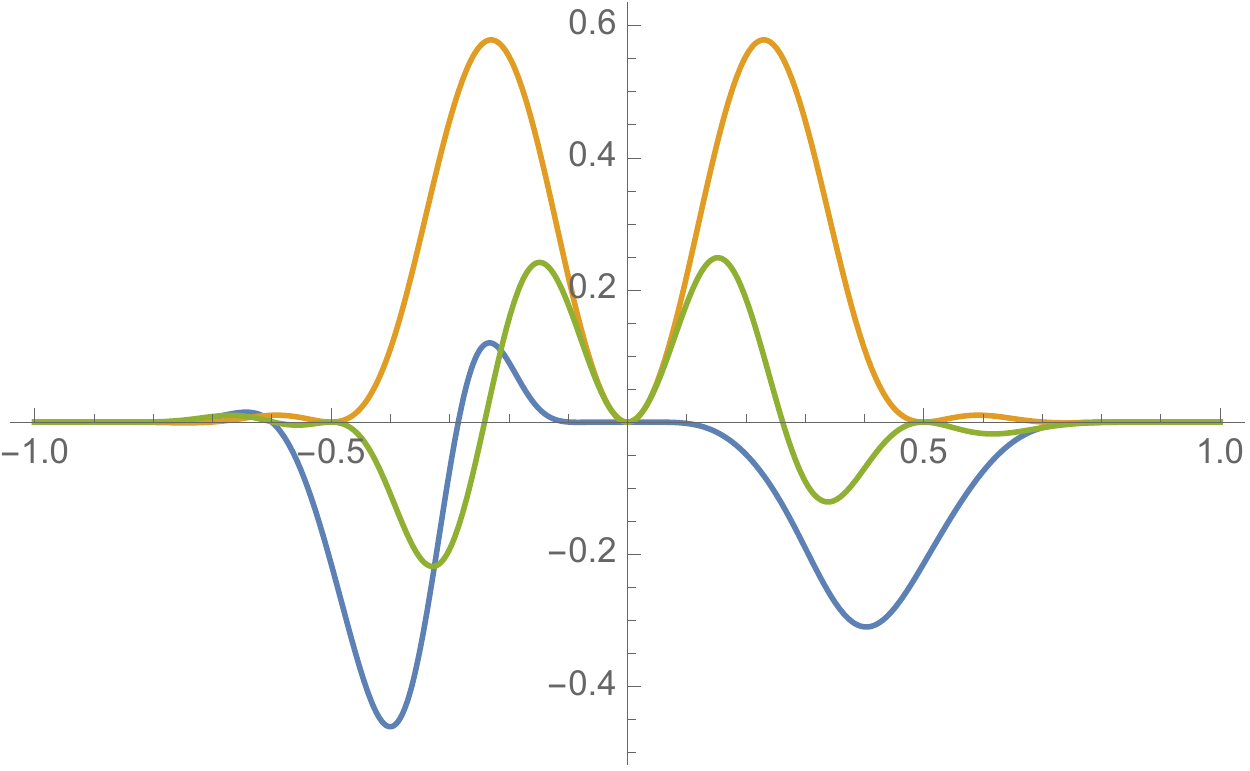}
\includegraphics[width = 0.3\textwidth]{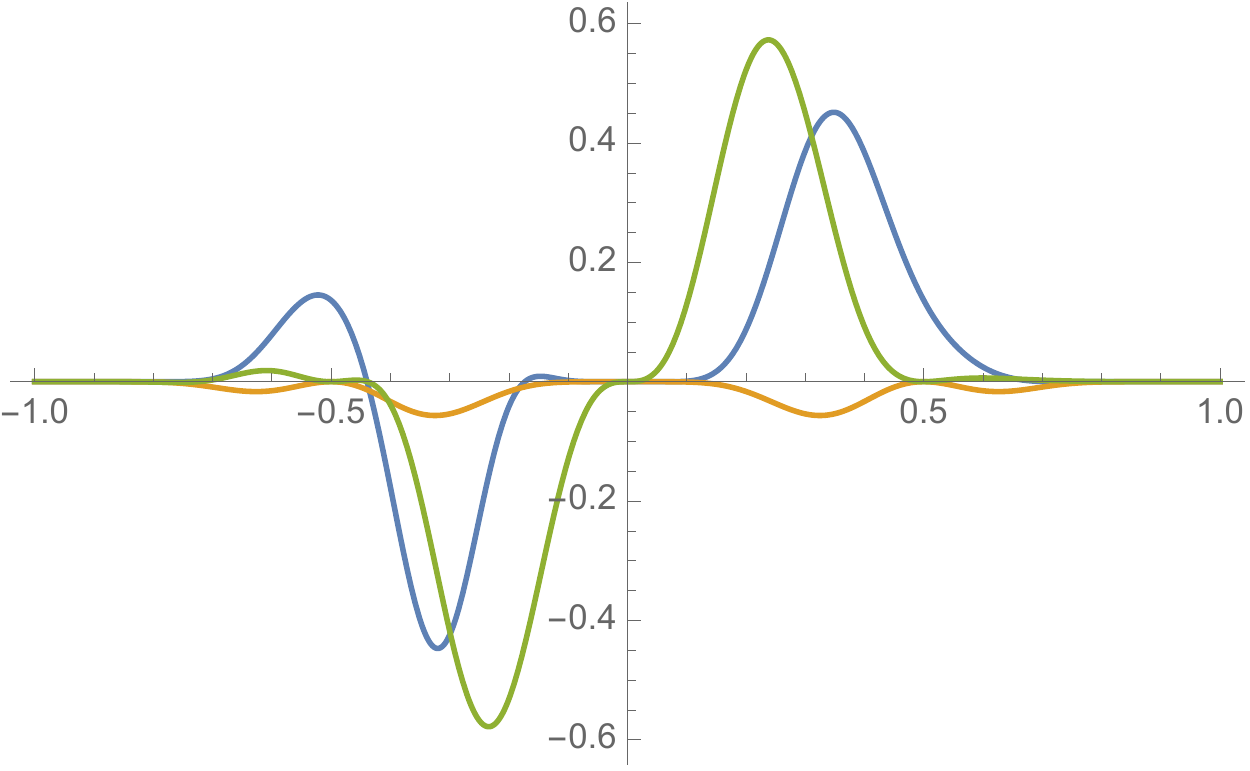}

\flushleft{$\ell = 2:$}\\
\includegraphics[width = 0.3\textwidth]{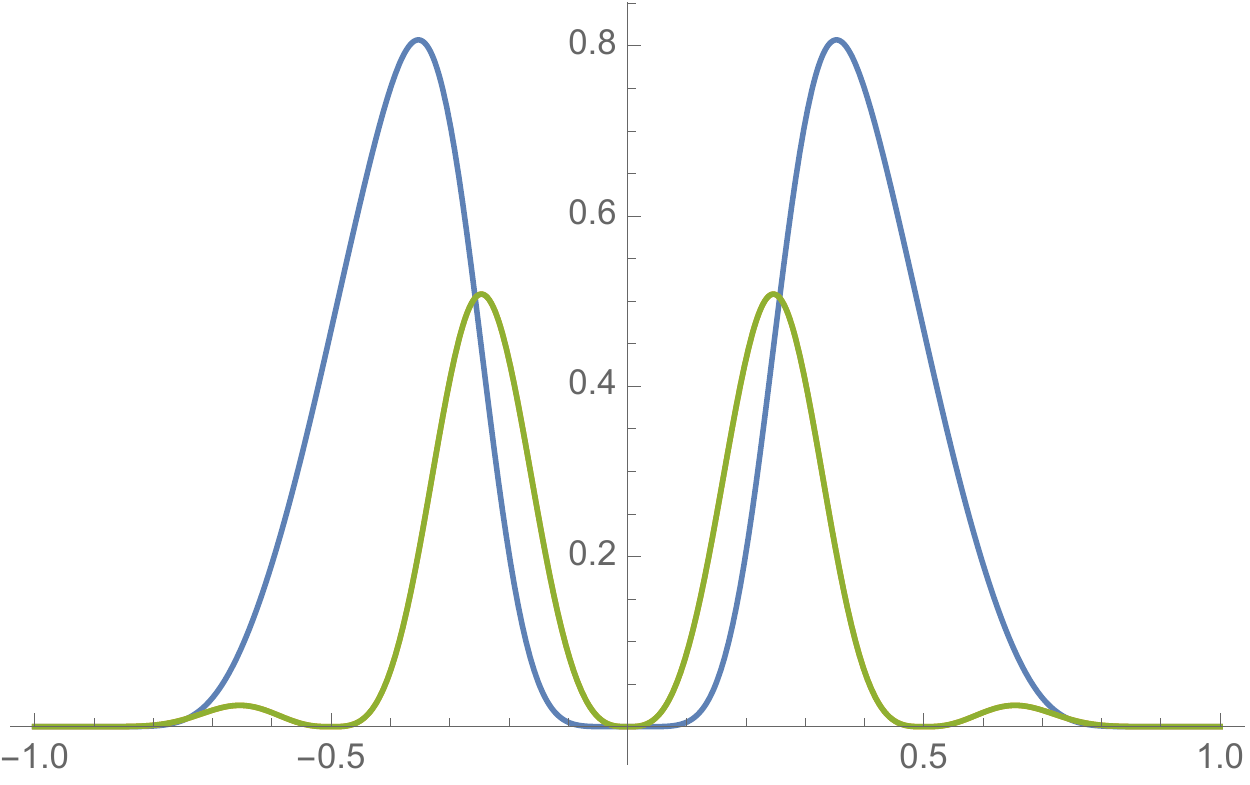}
\includegraphics[width = 0.3\textwidth]{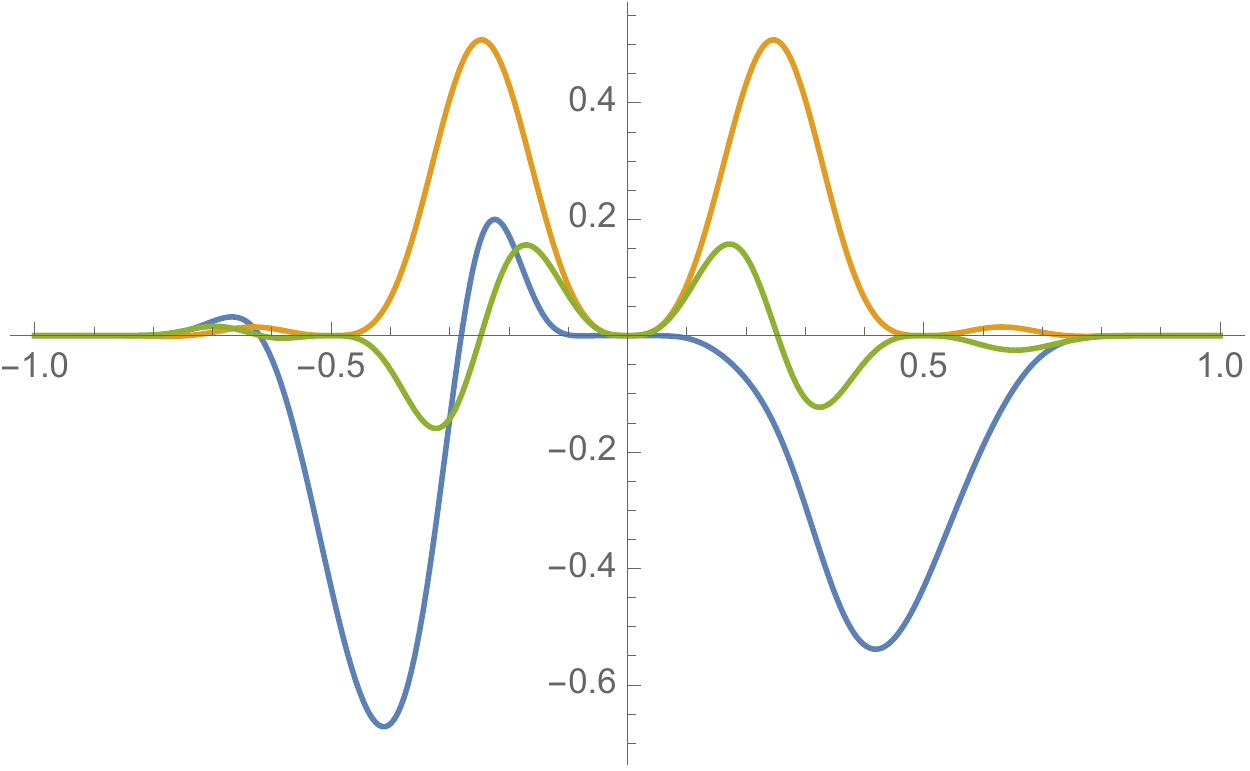}
\includegraphics[width = 0.3\textwidth]{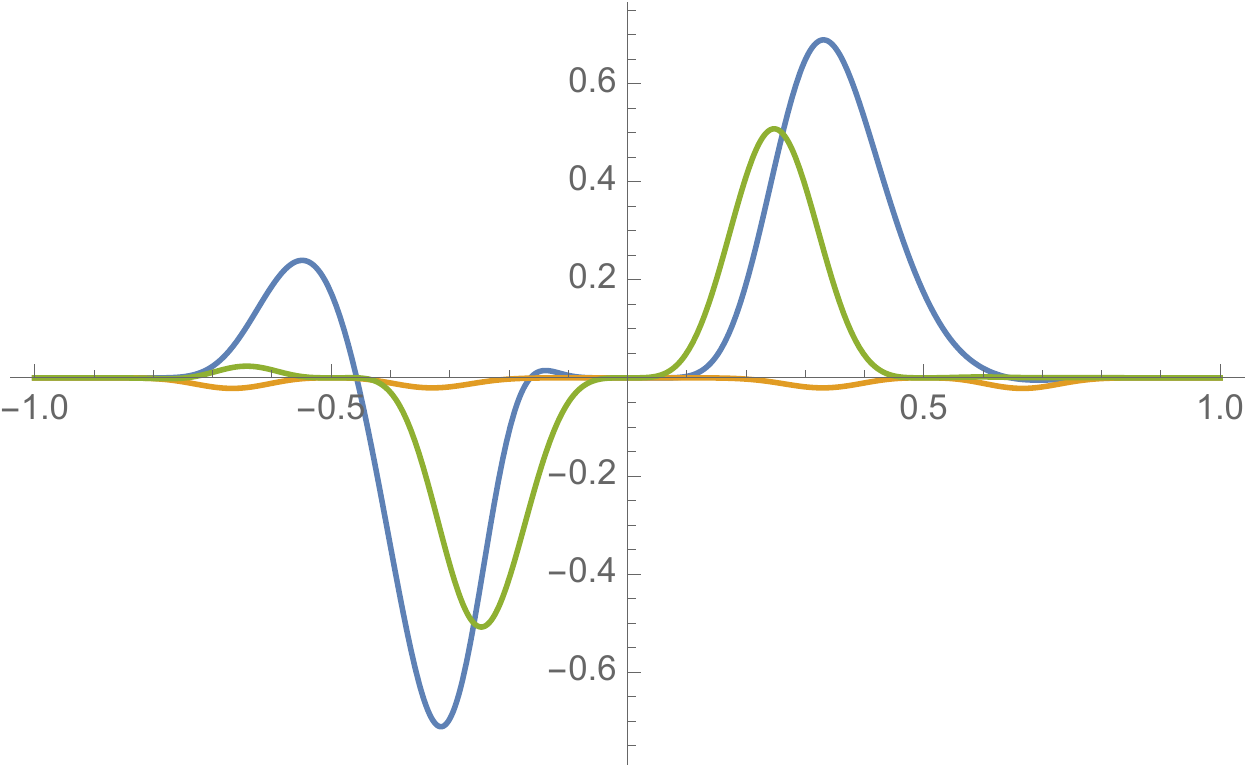}

\flushleft{$\ell = 3:$}\\
\includegraphics[width = 0.3\textwidth]{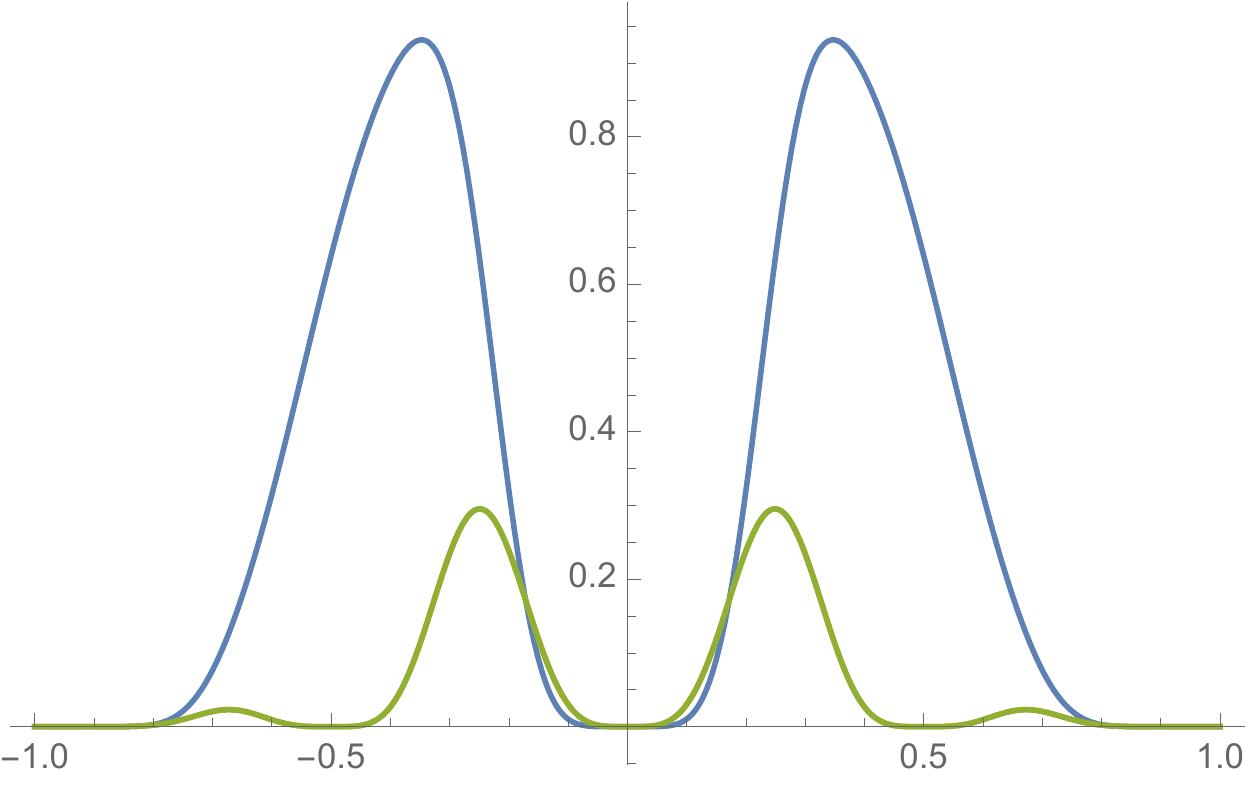}
\includegraphics[width = 0.3\textwidth]{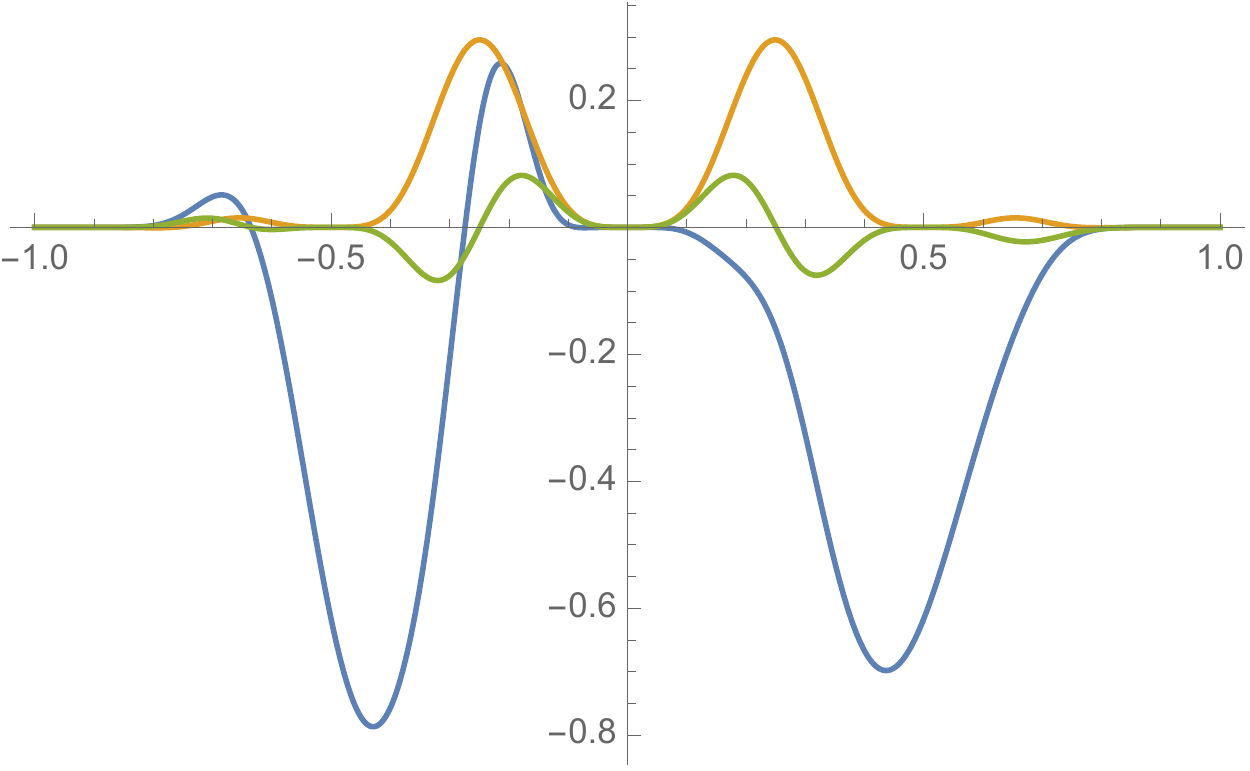}
\includegraphics[width = 0.3\textwidth]{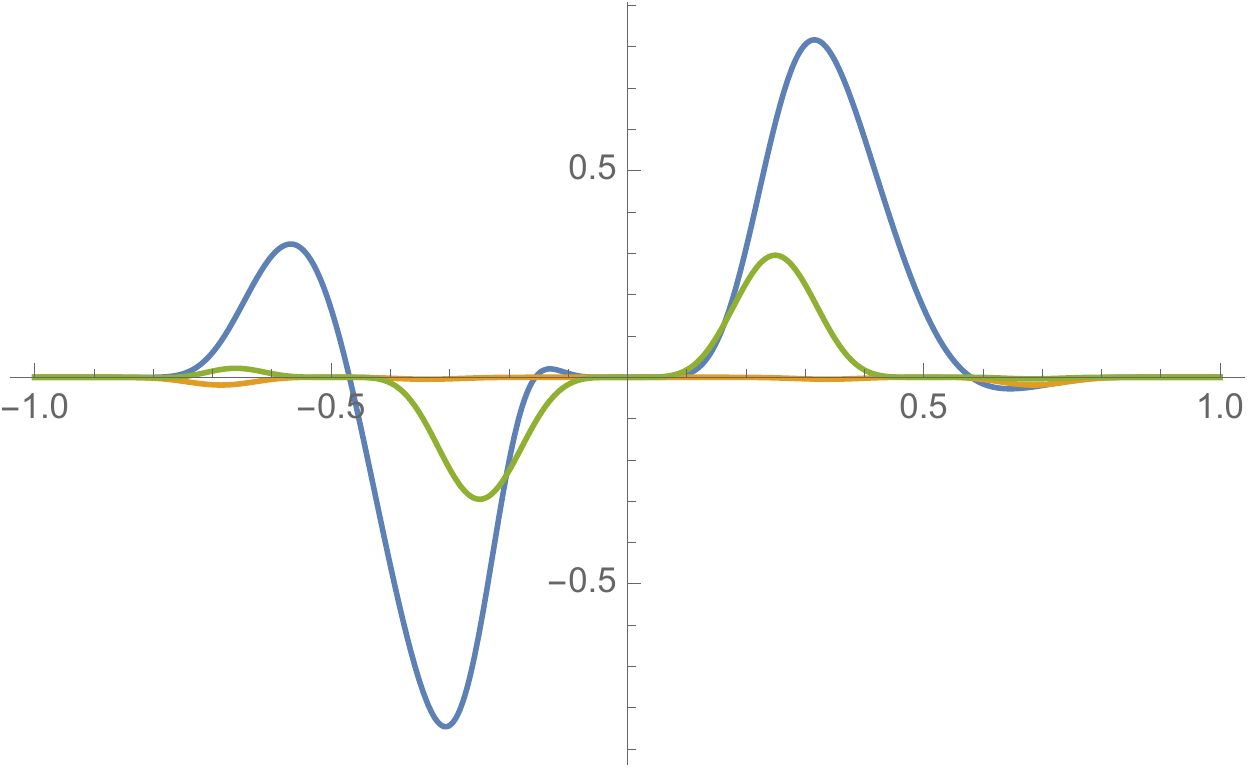}

\label{fig Wavelets im Fourier-Bereich}
\caption{The three framelets $\psi_1$, $\psi_2$, and $\psi_3$ from Proposition \ref{9603} in the frequency domain. Left: modulus, center: real part, right: imaginary part. Note that two of the framelets have the same modulus. Parameter: $z = 3.2 + i$.}
\end{figure}

\begin{figure}[h!]
\centering
\flushleft{$\ell = 0:$}\\
\includegraphics[width = 0.3\textwidth]{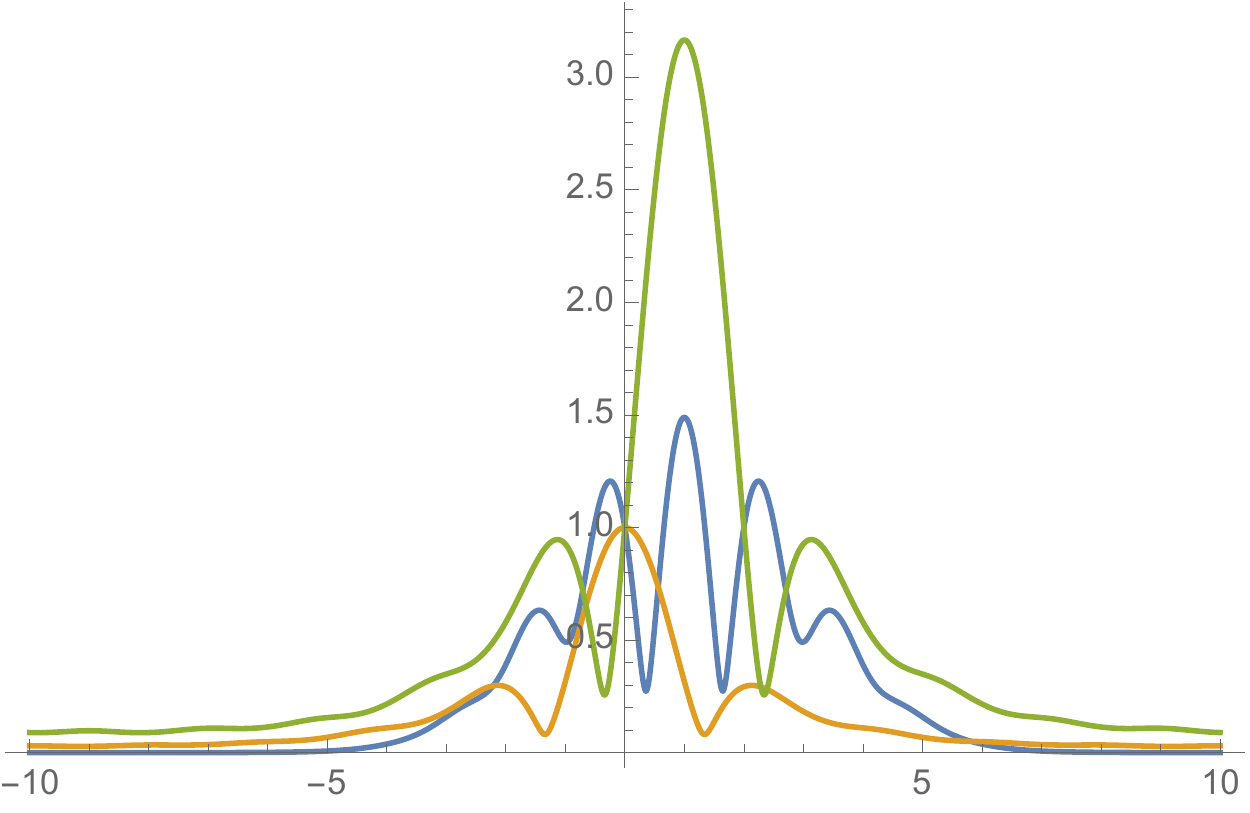}
\includegraphics[width = 0.3\textwidth]{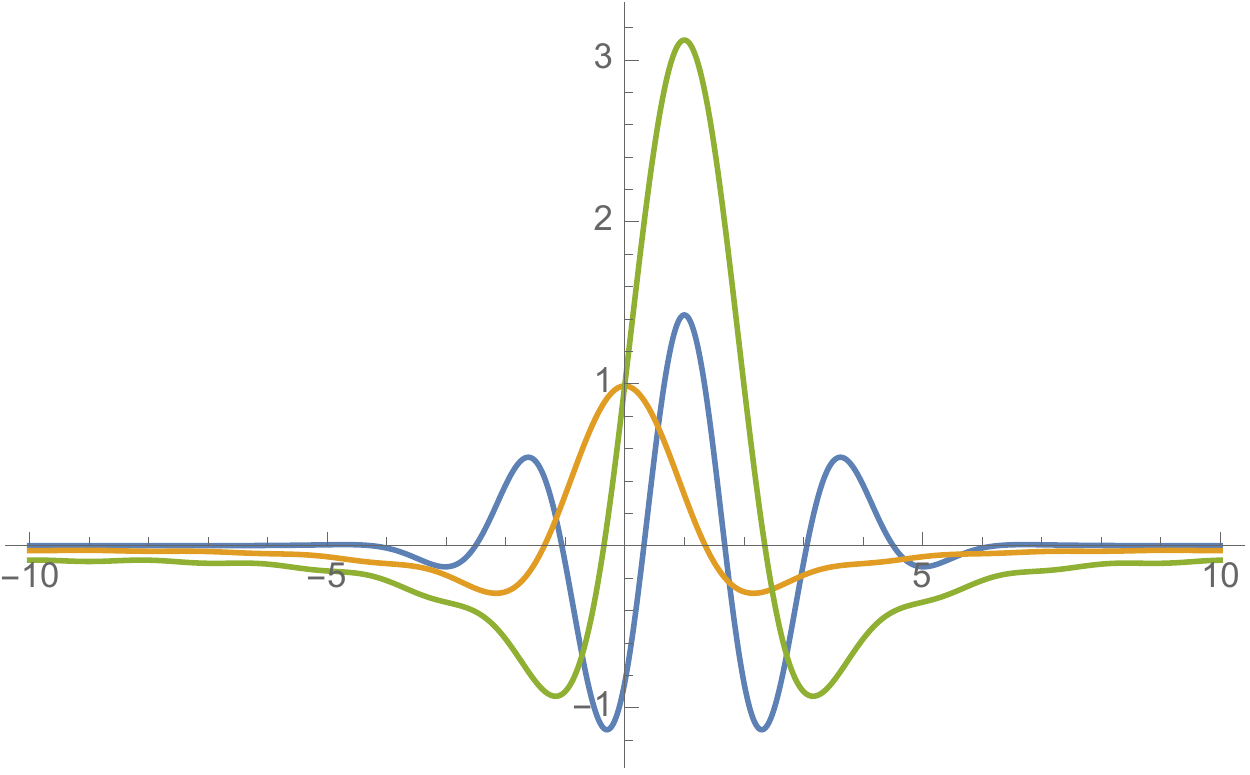}
\includegraphics[width = 0.3\textwidth]{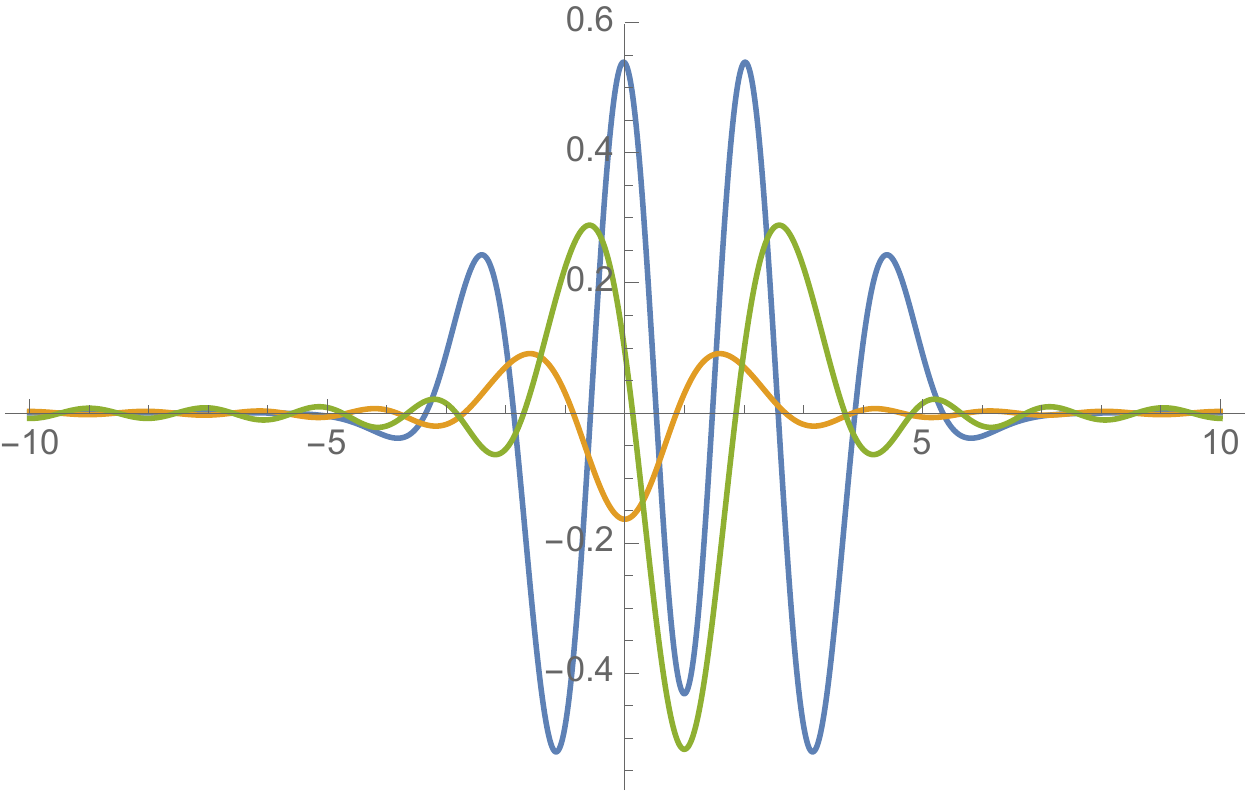}

\flushleft{$\ell = 1:$}\\
\includegraphics[width = 0.3\textwidth]{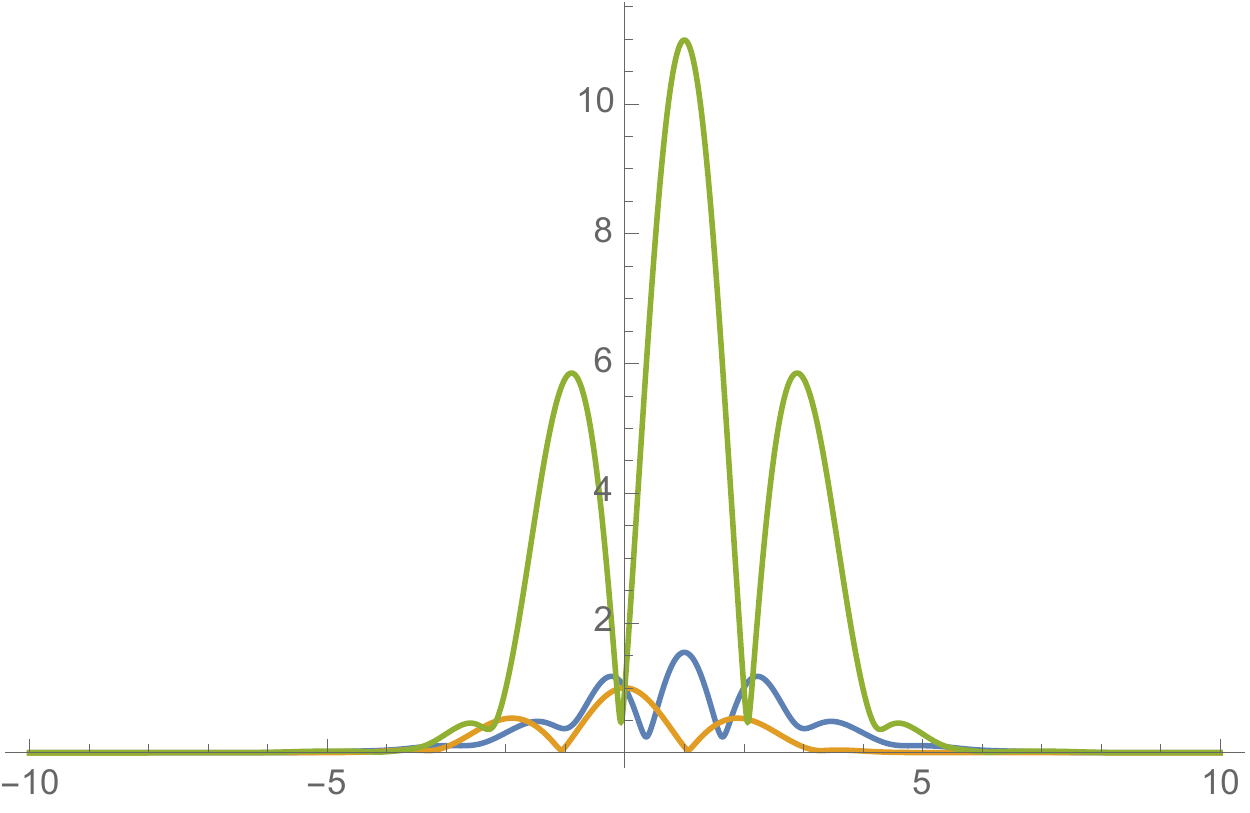}
\includegraphics[width = 0.3\textwidth]{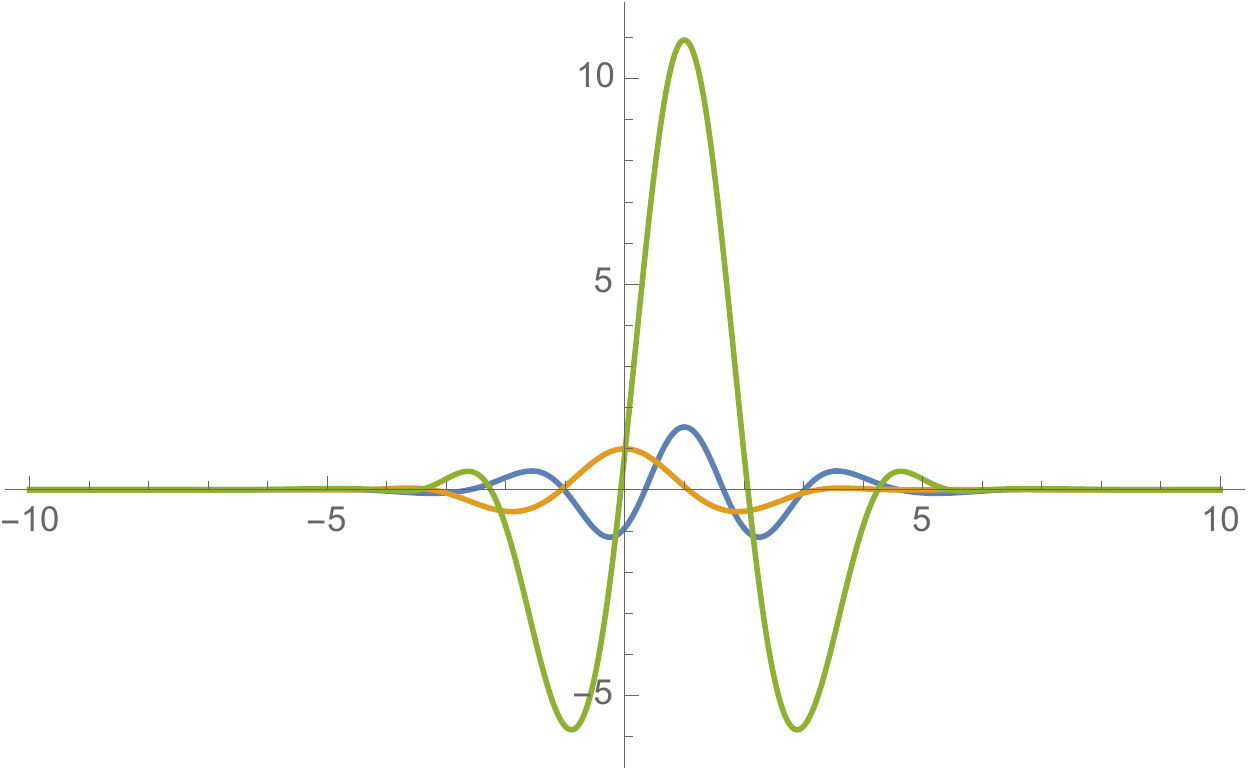}
\includegraphics[width = 0.3\textwidth]{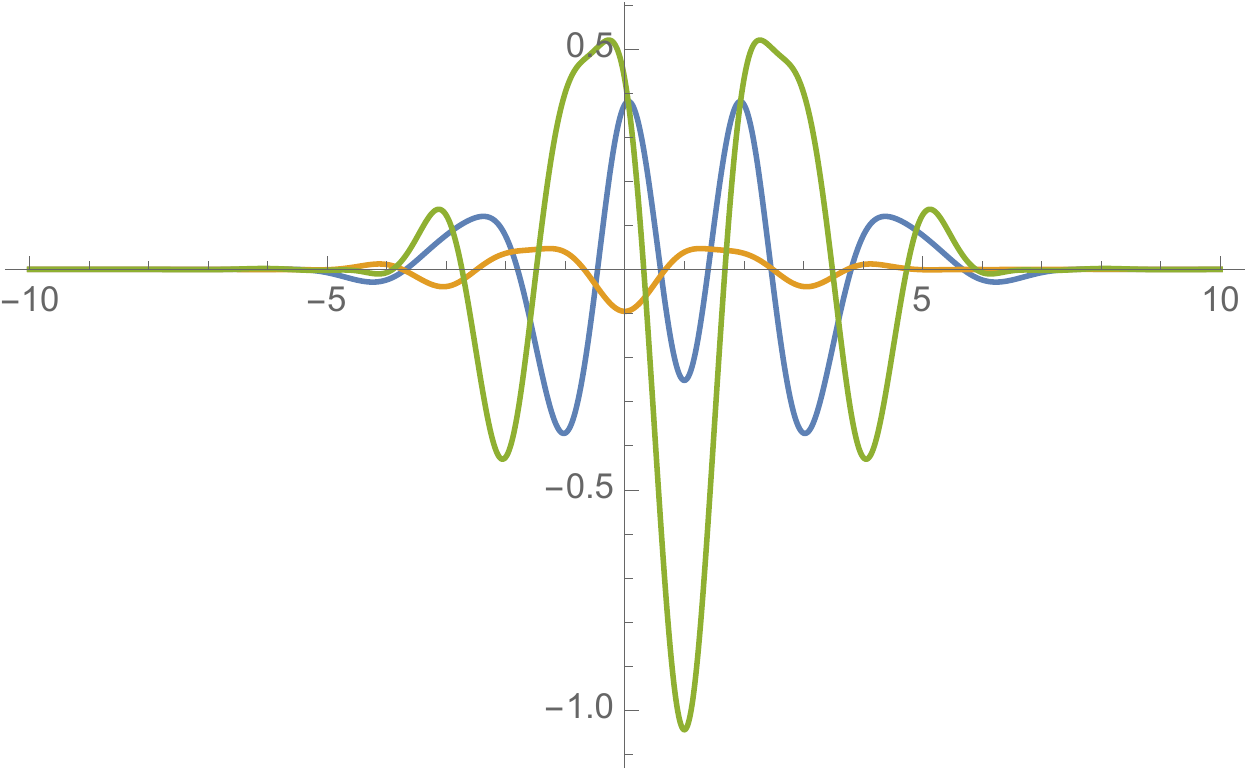}

\flushleft{$\ell = 2:$}\\
\includegraphics[width = 0.3\textwidth]{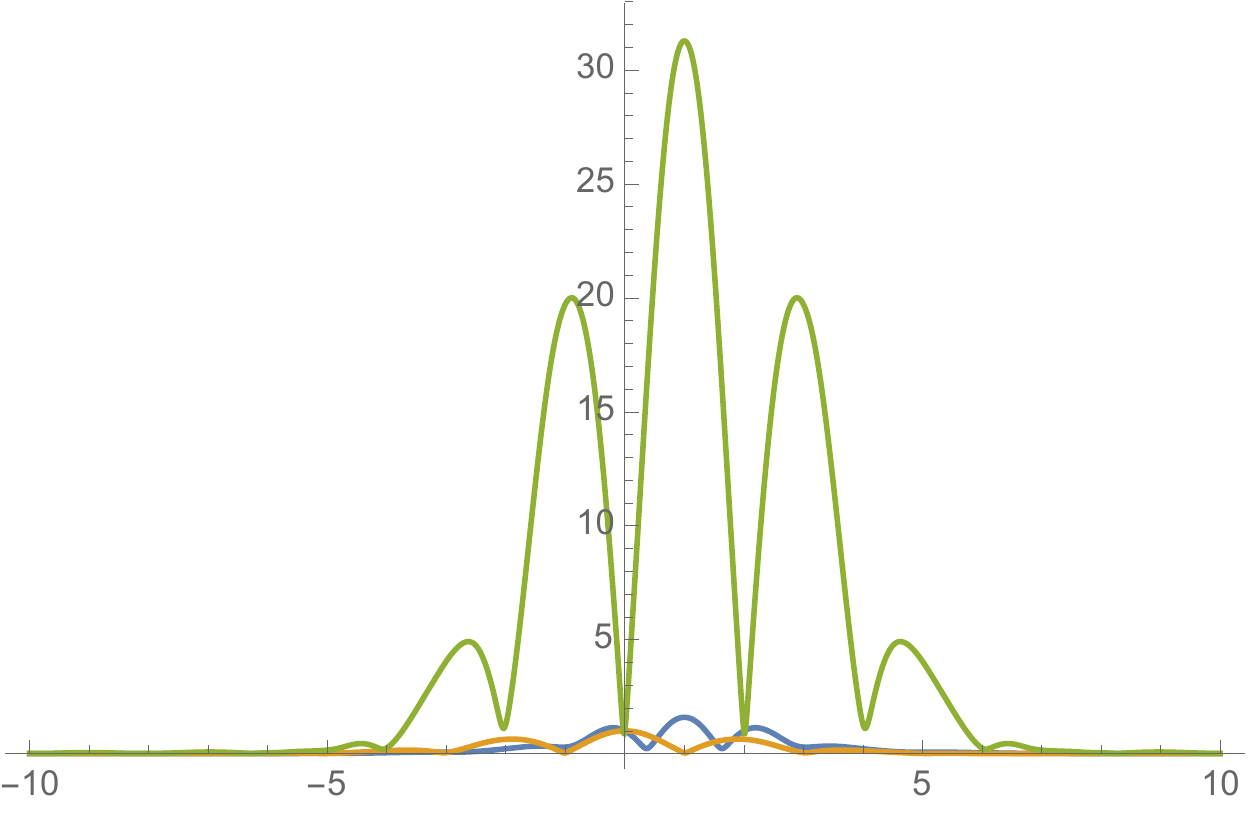}
\includegraphics[width = 0.3\textwidth]{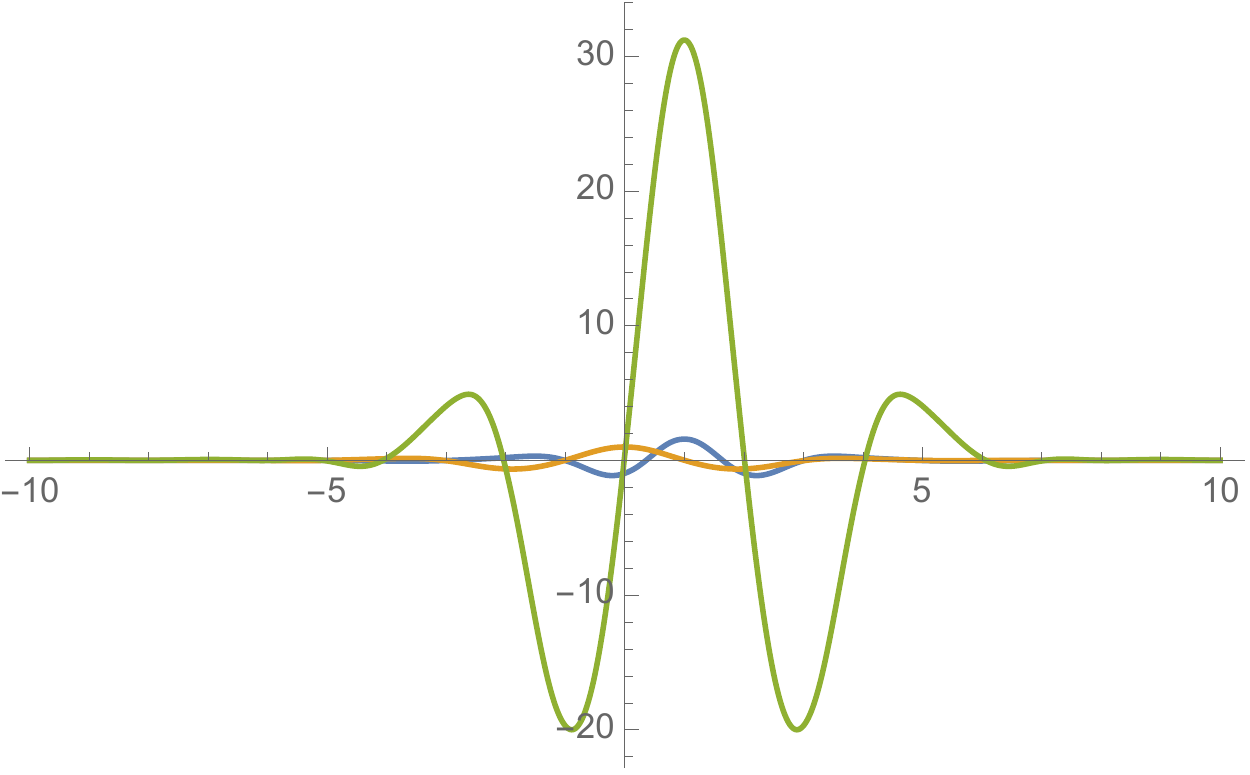}
\includegraphics[width = 0.3\textwidth]{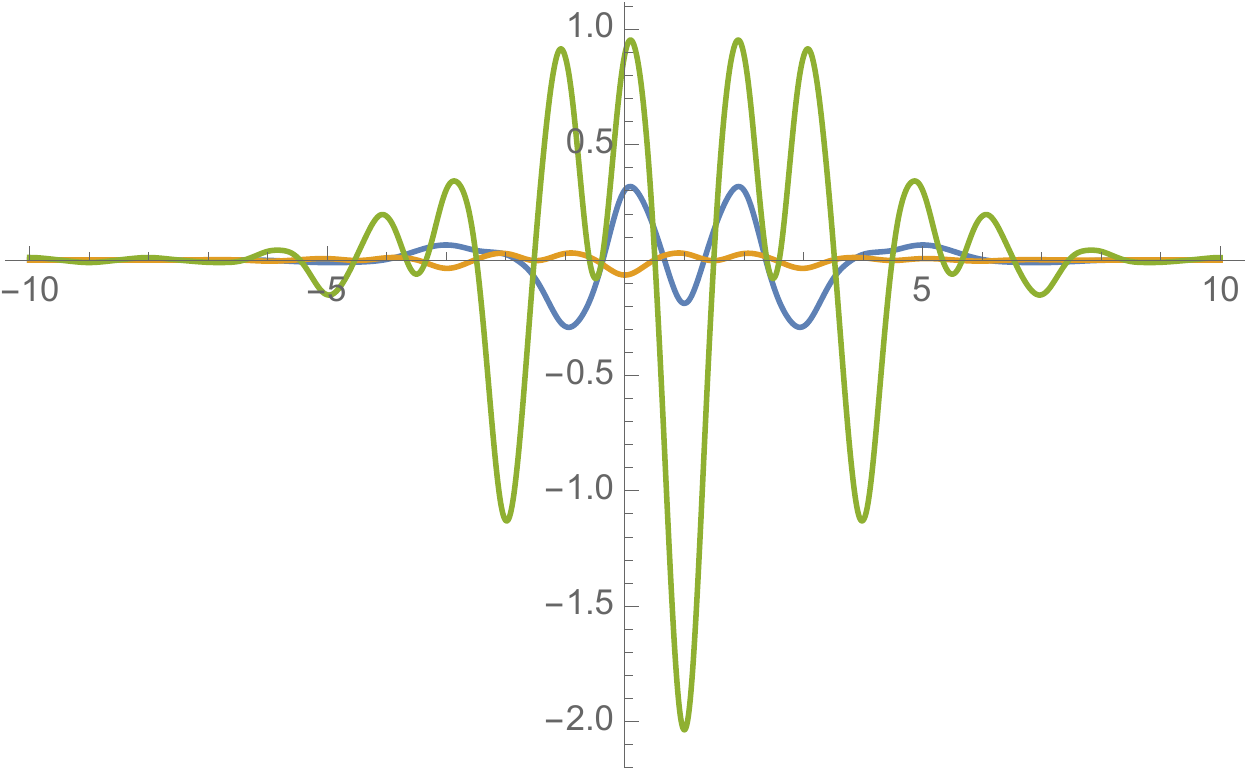}

\flushleft{$\ell = 3:$}\\
\includegraphics[width = 0.3\textwidth]{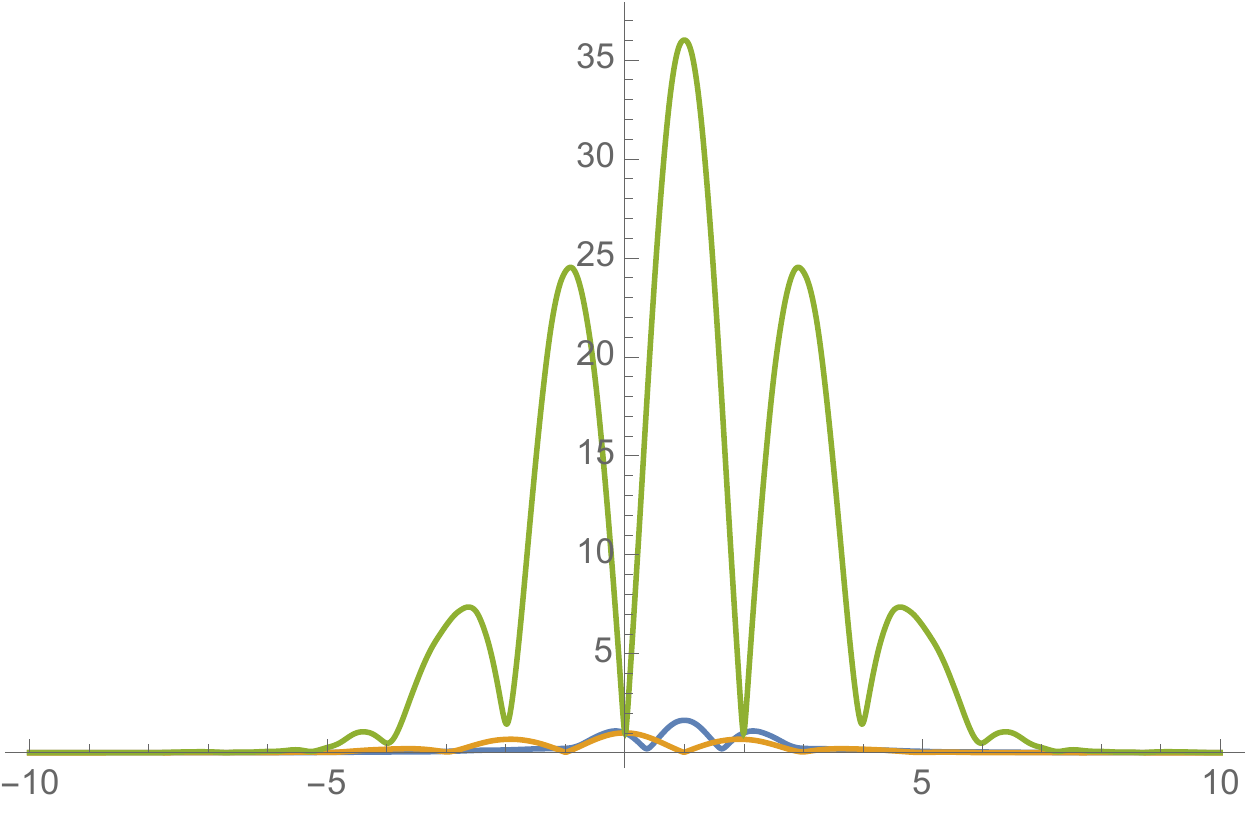}
\includegraphics[width = 0.3\textwidth]{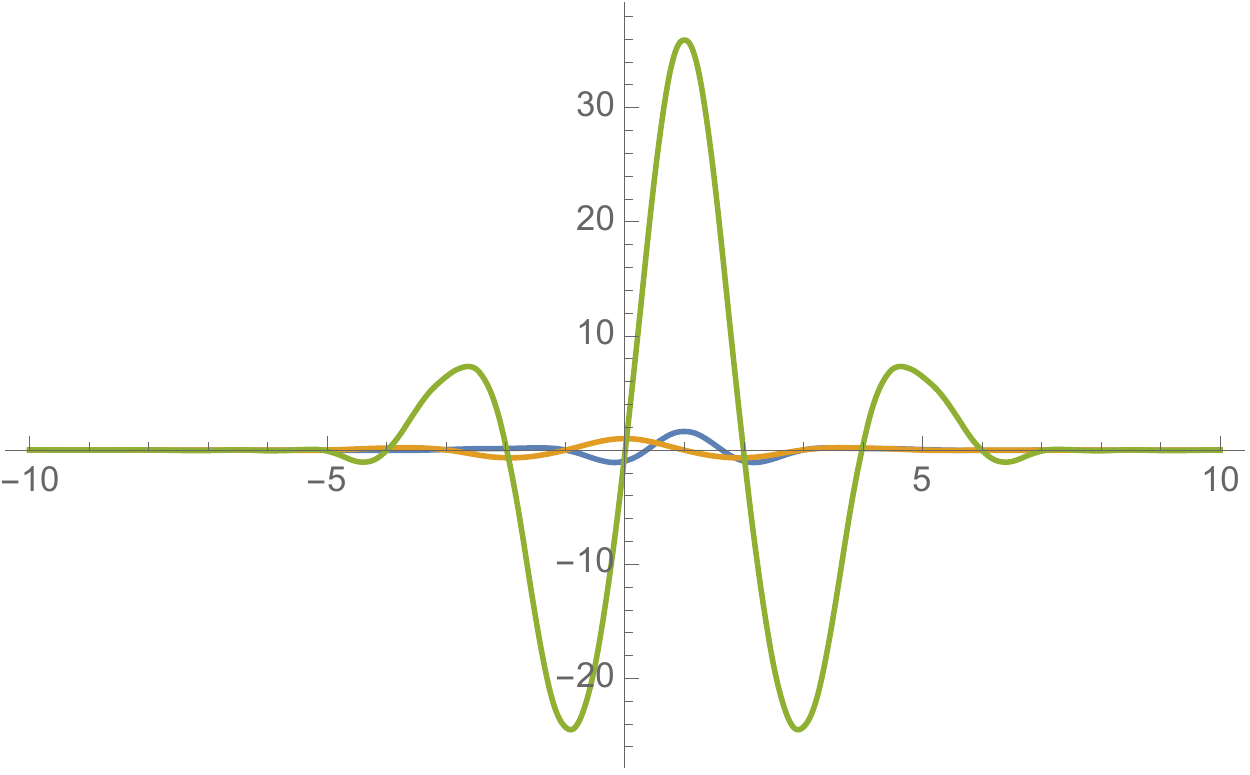}
\includegraphics[width = 0.3\textwidth]{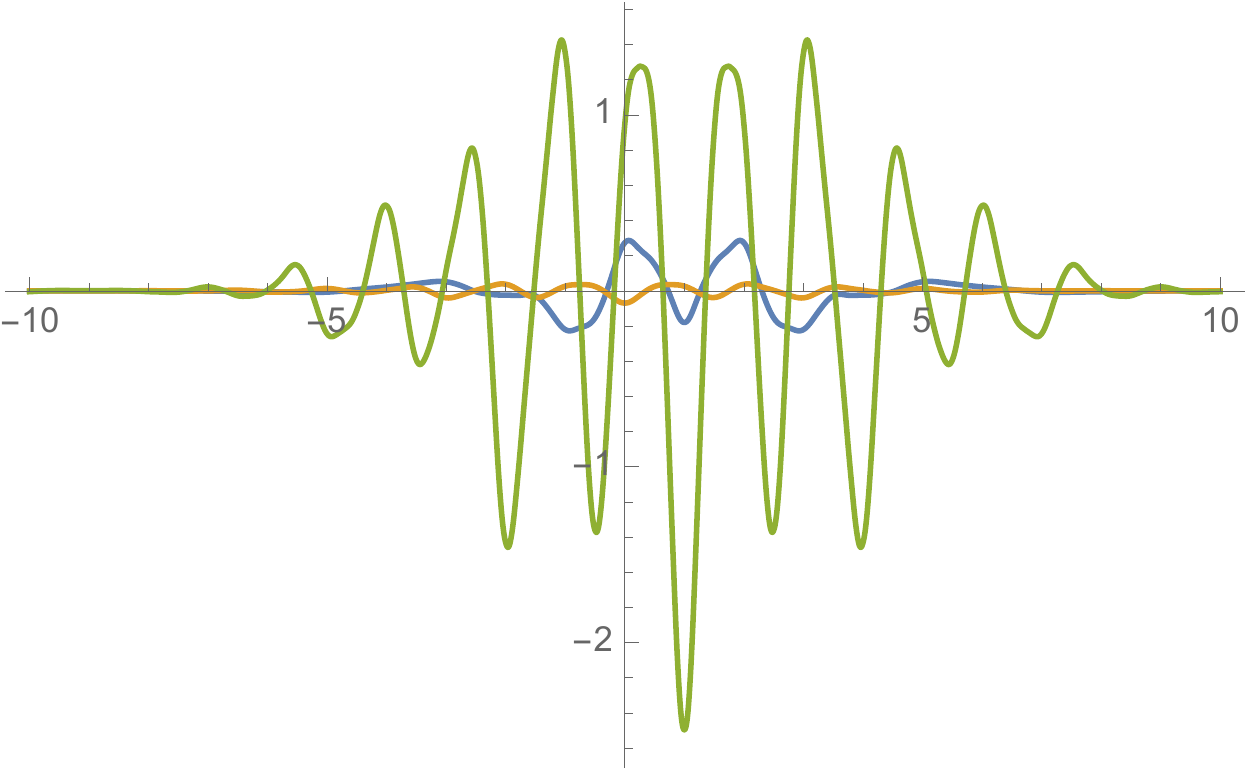}

\label{fig Wavelets im Zeit-Bereich}
\caption{The three framelets $\psi_1$,$\psi_2$, and $\psi_3$ from Proposition \ref{9603} in the time domain. Left: modulus, center: real part, right: imaginary part. Parameter: $z = 3.2 + i$.}
\end{figure}

\section{Regularity and Approximation Properties of Fractional Pseudo-Splines}
In this section, we investigate the regularity and approximation properties of fractional pseudo-splines. For this purpose, we recall that in the case $\ell = 0$, the fractional B-splines $\beta^{2\alpha - 1}$ as introduced in \cite{unserblu00} have (fractional) approximation order $2\alpha$.

The result about the regularity of fractional pseudo-splines is based on the following theorem, which is Lemma 7.1.7 in \cite{daubechies92}, adapted to our setting, terminology, and notation.
\begin{lemma}\label{thm5.1}
Let $H_0 (\gamma)$, the low pass filter for a refinable function $\varphi$, have the form
\[
|H_0(\gamma)| = (\cos^2\pi\gamma)^\alpha |\mathcal{L}(\gamma)|, \quad\gamma\in \T,
\]
where $\alpha\geq 1$. Assume that
\begin{enumerate}
\item[\emph{(i)}] $|\mathcal{L}(\gamma)|\leq |\mathcal{L}(\frac13)|$, for all $|\gamma|\leq \frac13$;
\item[\emph{(ii)}] $|\mathcal{L}(\gamma)\mathcal{L}(2\gamma)|\leq |\mathcal{L}(\frac13)|^2$, for all $\frac13\leq|\gamma|\leq\frac12$.
\end{enumerate}
Then
\[
|\widehat{\varphi} (\gamma)| \leq c\,(1 + |\gamma|)^{-2\alpha + \kappa},
\]
where $\kappa = \log_2 |\mathcal{L}(\frac13)|$ and $c$ denotes a positive constant.
\end{lemma}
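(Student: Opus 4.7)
The plan is to exploit the infinite product representation $\widehat{\varphi}(\gamma)=\prod_{m=1}^\infty H_{0}(2^{-m}\gamma)$ from \eqref{eq infinite product}, substitute the factorization $|H_0(\gamma)|=(\cos^2\pi\gamma)^\alpha|\mathcal{L}(\gamma)|$, and split the decay into two competing pieces:
$$
|\widehat{\varphi}(\gamma)|=\left(\prod_{m=1}^\infty \cos^2(\pi 2^{-m}\gamma)\right)^{\!\alpha}\prod_{m=1}^\infty |\mathcal{L}(2^{-m}\gamma)|.
$$
The classical identity $\prod_{m=1}^\infty\cos(\pi 2^{-m}\gamma)=\sin(\pi\gamma)/(\pi\gamma)$ disposes of the first factor: it equals $|\sin(\pi\gamma)/(\pi\gamma)|^{2\alpha}$, which is $O(|\gamma|^{-2\alpha})$ for $|\gamma|\geq 1$. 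Hence the theorem reduces to showing
$$
\prod_{m=1}^\infty |\mathcal{L}(2^{-m}\gamma)|\leq C\,(1+|\gamma|)^\kappa,\qquad \kappa:=\log_2|\mathcal{L}(\tfrac13)|,
$$
from which $(1+|\gamma|)^{-2\alpha+\kappa}$ emerges upon multiplication (the case $|\gamma|<1$ being trivial since $|\widehat\varphi|\leq 1$).

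To estimate the remaining product I would split it at the index $J=J(\gamma):=\lceil\log_2(3|\gamma|)\rceil$, so that $|2^{-m}\gamma|\leq 1/3$ for all $m>J$. For the tail $m>J$, hypothesis (i) gives $|\mathcal{L}(2^{-m}\gamma)|\leq K:=|\mathcal{L}(\tfrac13)|$, and the product $\prod_{m>J}|\mathcal{L}(2^{-m}\gamma)|$ converges to a value bounded uniformly in $\gamma$. This uniform bound follows from the H\"older-type estimate $|\mathcal{L}(\eta)-1|=O(|\eta|^\varepsilon)$ near zero, inherited from condition~2) of Lemma \ref{L L2 Konvergenz infinite product} applied to $H_0$ together with $\cos^2(0)=1$ and $H_0(0)=1$ (which force $\mathcal{L}(0)=1$).

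The core of the argument is the finite product $\prod_{m=1}^{J}|\mathcal{L}(2^{-m}\gamma)|$. Using the $1$-periodicity of $\mathcal{L}$, reduce each argument to $\widetilde{\gamma}_m\equiv 2^{-m}\gamma\pmod 1$ in $(-\tfrac12,\tfrac12]$, and observe that $\widetilde{\gamma}_{m-1}\equiv 2\widetilde{\gamma}_m\pmod 1$, so that hypothesis (ii) controls $|\mathcal{L}(\widetilde{\gamma}_m)\mathcal{L}(\widetilde{\gamma}_{m-1})|\leq K^2$ whenever $|\widetilde{\gamma}_m|\in(\tfrac13,\tfrac12]$. A short direct calculation on the doubling map shows that such ``bad'' indices $m$ must be surrounded by ``good'' indices (those with $|\widetilde{\gamma}_{m\pm 1}|\leq\tfrac13$), and hence a greedy pairing exhausts $\{1,\dots,J\}$ with no factor used twice. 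Each pair costs $K^2$ by (ii) and each unpaired factor costs $K$ by (i); altogether,
$$
\prod_{m=1}^{J}|\mathcal{L}(2^{-m}\gamma)|\leq C'\,K^{J}=C'\,(2^J)^\kappa\leq C''\,(1+|\gamma|)^\kappa.
$$
Combining the three estimates yields the claimed bound $|\widehat{\varphi}(\gamma)|\leq c\,(1+|\gamma|)^{-2\alpha+\kappa}$.

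The main obstacle will be the clean execution of this pairing step: one must verify that every bad index is matched with an available good neighbor so that (ii) applies, and that no factor is counted twice or missed at the boundary $m=1$ or $m=J$. This is the combinatorial heart of Daubechies' proof of Lemma~7.1.7; once it is settled, the rest is straightforward algebraic bookkeeping driven by the two factorizations above.
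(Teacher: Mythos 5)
Your proposal is correct and is essentially the paper's own proof: the paper handles this lemma by simply invoking the arguments of Lemmata 7.1.5 and 7.1.6 in Daubechies' book (observing only that the integer $N$ there may be replaced by a real $\alpha\geq 1$), and your decomposition into the $|\sin(\pi\gamma)/(\pi\gamma)|^{2\alpha}$ factor, the splitting of $\prod_{m}|\mathcal{L}(2^{-m}\gamma)|$ at $J\approx\log_2|\gamma|$, and the good/bad pairing along the doubling map are exactly those arguments, spelled out rather than cited. The only caveat is that the H\"older bound $|\mathcal{L}(\eta)-1|=O(|\eta|^{\varepsilon})$ near $0$ (needed for the uniform tail estimate) is not among the lemma's stated hypotheses, but your appeal to condition 2) of Lemma \ref{L L2 Konvergenz infinite product} is legitimate in the paper's setting, where $H_0$ is assumed to generate $\varphi$ via the cascade algorithm.
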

\begin{proof}
The proof follows from the arguments given to establish Lemmata 7.1.5 and 7.1.6 in \cite{daubechies92}. We note that replacing the positive integer $N$ in \cite{daubechies92} by a real number $\alpha\geq 1$ has no bearing on the validity of these arguments.
\end{proof}

The next result is the extension of Proposition 3.2 in \cite{DongShen} to the fractional case.
\begin{lemma}
Suppose $p:= p_{\alpha, \ell} := \sum\limits_{k=0}^\ell \binom{\alpha + \ell}{k} x^k (1-x)^{\ell-k}$, where $\alpha\geq 1$ and $\ell = 0, 1, \ldots, \lfloor\alpha-\frac12\rfloor$. Then
\begin{enumerate}
\item[\emph{(a)}] $p(x) \leq p(\frac34)$, for $0\leq x \leq \frac34$;
\item[\emph{(b)}] $p(x) p(4x(1-x)) \leq p(\frac34)^2$, for $\frac34\leq x\leq 1$.
\end{enumerate}
\end{lemma}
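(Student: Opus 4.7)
For part (a), I would invoke the alternative representation
\[
p(x)=\sum_{k=0}^{\ell}\binom{\alpha-1+k}{k}x^k
\]
provided by Lemma \ref{60116a}. For $\alpha\geq 1$, every coefficient $\binom{\alpha-1+k}{k}=\Gamma(\alpha+k)/[\Gamma(k+1)\Gamma(\alpha)]$ is non-negative, so $p$ is non-decreasing on $[0,\infty)$, which immediately yields $p(x)\leq p(\tfrac34)$ for $x\in[0,\tfrac34]$.

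For part (b), my plan is to set $h(x):=p(x)\,p(4x(1-x))$ and to show that $h$ is non-increasing on $[\tfrac34,1]$; since $h(\tfrac34)=p(\tfrac34)^2$, this yields the claim. With $y:=4x(1-x)$, direct differentiation gives
\[
h'(x)=p'(x)\,p(y)+(4-8x)\,p(x)\,p'(y).
\]
The key simplification comes from the structural identity $(1-\xi)\,p'(\xi)=\alpha\,p(\xi)-C\,\xi^\ell$ with $C:=(\alpha+\ell)\binom{\alpha-1+\ell}{\ell}$, which follows from Lemma \ref{60116a} by equating the two expressions $q'(\xi)=-\alpha(1-\xi)^{\alpha-1}p(\xi)+(1-\xi)^\alpha p'(\xi)$ and $q'(\xi)=-C\,\xi^\ell(1-\xi)^{\alpha-1}$. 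Substituting this identity together with the factorization $1-y=(2x-1)^2$, clearing denominators and dividing by $2x-1>0$, the desired inequality $h'(x)\leq 0$ reduces to
\[
\alpha(6x-5)\,p(x)p(y)+C\bigl[4(1-x)p(x)\,y^\ell-(2x-1)\,x^\ell p(y)\bigr]\leq 0\quad\text{for } x\in[\tfrac34,1].
\]

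The hard part will be establishing this last inequality. Both the factor $6x-5$ and the bracketed difference vanish at $x=\tfrac34$ (where $y=\tfrac34$), so a local expansion at $\tfrac34$ combined with a global monotonicity argument on $(\tfrac34,1]$ seems unavoidable. My strategy is to adapt the proof of Proposition~3.2 in \cite{DongShen} by replacing the integer parameter $m$ with the real parameter $\alpha\geq 1$: the only tools used there are Pascal-type recursions (already extended to complex parameters in the proof of Lemma \ref{60116a}) and the non-negativity of $\binom{\alpha-1+k}{k}$, both of which remain valid in the present setting. The hypothesis $\ell\leq\lfloor\alpha-\tfrac12\rfloor$ plays the same role as in the integer case, ensuring that the factor $(1-x)^{\alpha-1}$ implicit in $p'$ dominates $x^\ell$ near $x=1$ and so keeps the sign of the above expression under control. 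As a consistency check, at $x=\tfrac34$ the reduced expression collapses via the identity $C(\tfrac34)^\ell=\alpha\,p(\tfrac34)-\tfrac14 p'(\tfrac34)$ to $-\tfrac18 p(\tfrac34)\,p'(\tfrac34)\leq 0$, in agreement with the direct computation $h'(\tfrac34)=-p(\tfrac34)\,p'(\tfrac34)$.
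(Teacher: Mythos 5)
Your proof is correct and takes essentially the same approach as the paper: for (a) the paper gives exactly your argument (positivity of the coefficients in the representation \eqref{p} makes $p$ strictly increasing on $[0,1]$), and for (b) the paper, just as you do, defers the hard inequality to the proof of Proposition 3.2 in \cite{DongShen}, observing that only the identity \eqref{iden} is used there and that the algebraic manipulations are insensitive to replacing $m\in\N$ by a real $\alpha\geq 1$. Your explicit reduction of $h'(x)\leq 0$ via the identity $(1-\xi)\,p'(\xi)=\alpha\,p(\xi)-C\,\xi^{\ell}$ (which follows correctly from Lemma \ref{60116a}) and your consistency check at $x=\tfrac34$ are an accurate, more detailed rendering of the adaptation the paper states only in words.
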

\begin{proof}
For item (a), we remark that by \eqref{p} and the positivity of the binomial coefficients, we have
\[
p'(x) = \alpha + 2\binom{\alpha + 1}{2} x + \cdots + \ell \binom{\alpha-1+\ell}{\ell} x^{\ell-1} > 0,
\]
for all $x\in [0,1]$.
\ml
For (b), a careful analysis of the proof of Proposition 3.2 in \cite{DongShen} reveals that the arguments given there also apply when the parameter $m \in \N$ in the statement is replaced by a real number
$\alpha\geq 1$. Only the identity \eqref{iden} was used and the algebraic manipulations performed are independent of whether $m\in \N$ or $1\leq\alpha\in \R$.
\end{proof}
As a consequence, we obtain the following regularity result for fractional pseudo-splines.
\begin{thm}
Let $\varphi$ denote a pseudo-spline of order $(\alpha, \ell)$ with $1\leq\alpha\in \R$ and $\ell = 0,1, \ldots, \lfloor\alpha-\frac12\rfloor$. Then
\[
|\widehat{\varphi} (\gamma)| \leq c (1 + |\gamma|)^{-2\alpha + \kappa},
\]
where $\kappa = \log_2 p(\frac34)$. Moreover,
\[
\varphi\in C^{s-\varepsilon}, \quad\text{with}\quad s = 2\alpha - \kappa - 1.
\]
\end{thm}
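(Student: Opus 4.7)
The plan is to apply Lemma \ref{thm5.1} with the identification $\mathcal{L}(\gamma) := p(\sin^2\pi\gamma)$, where $p = p_{\alpha,\ell}$ is the polynomial from the preceding lemma. For real $z = \alpha \geq 1$, the filter defined in \eqref{eq1} admits the factorization $|H_0(\gamma)| = (\cos^2\pi\gamma)^\alpha\,|p(\sin^2\pi\gamma)|$ directly, since $(\cos^2\pi\gamma)^\alpha \geq 0$ and $p$ has real coefficients; in fact the monotonicity computation used in part (a) of the preceding lemma shows $p(x) \geq p(0) = 1$ on $[0,1]$, so no absolute value is lost. Once I verify the two hypotheses on $\mathcal{L}$, the desired decay estimate is immediate with $\kappa = \log_2 |\mathcal{L}(\tfrac13)| = \log_2 p(\sin^2\tfrac{\pi}{3}) = \log_2 p(\tfrac34)$.

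For hypothesis (i), I note that for $|\gamma| \leq \tfrac13$ the substitution $x = \sin^2\pi\gamma$ lies in $[0,\tfrac34]$, since $\sin^2(\pi/3) = \tfrac34$ and $x \mapsto \sin^2\pi\gamma$ is increasing on $[0,\tfrac12]$. Part (a) of the preceding lemma then yields $|\mathcal{L}(\gamma)| = p(x) \leq p(\tfrac34) = |\mathcal{L}(\tfrac13)|$. For hypothesis (ii), I apply the double-angle identity $\sin^2 2\pi\gamma = 4\sin^2\pi\gamma\,\cos^2\pi\gamma = 4x(1-x)$. For $\tfrac13 \leq |\gamma| \leq \tfrac12$, we have $x \in [\tfrac34, 1]$, and part (b) of the preceding lemma gives exactly
\[
|\mathcal{L}(\gamma)\mathcal{L}(2\gamma)| = p(x)\,p(4x(1-x)) \leq p(\tfrac34)^2.
\]
Invoking Lemma \ref{thm5.1} then produces the bound $|\widehat{\varphi}(\gamma)| \leq c(1+|\gamma|)^{-2\alpha + \kappa}$.

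For the regularity statement, I appeal to the standard Fourier embedding: if $|\widehat{\varphi}(\gamma)| \leq c(1+|\gamma|)^{-r}$ with $r > 1$, then $\varphi$ lies in the H\"older class $C^{r-1-\varepsilon}$ for every $\varepsilon > 0$, since $\int (1+|\gamma|)^{r-1-\varepsilon} |\widehat{\varphi}(\gamma)|\,d\gamma$ is finite and one derives continuity of fractional derivatives by absolute convergence of the defining Fourier integrals. Specializing to $r = 2\alpha - \kappa$ gives $\varphi \in C^{s-\varepsilon}$ with $s = 2\alpha - \kappa - 1$, as claimed.

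The only step requiring some care is the verification of hypothesis (ii), since it relies on the double-angle algebra pairing exactly with the estimate (b); but this has been arranged precisely in the preceding lemma, whose proof in turn only used the identity \eqref{iden}, which is valid for real $\alpha$. The regularity conclusion is then a routine application of the Fourier-side H\"older criterion, so no substantive analytical difficulty arises beyond the reduction to Lemma \ref{thm5.1}.
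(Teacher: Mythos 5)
Your proposal is correct and follows essentially the same route as the paper: both apply Lemma \ref{thm5.1} with $\mathcal{L}(\gamma) = P^{(\alpha,\ell)}(\gamma) = p(\sin^2\pi\gamma)$, verify the two hypotheses via parts (a) and (b) of the preceding lemma, and deduce the H\"older regularity from the Fourier decay criterion. The only difference is cosmetic: you write out the substitution $x=\sin^2\pi\gamma$ and the double-angle verification explicitly, whereas the paper delegates exactly those steps to the proof of Theorem 3.4 in \cite{DongShen}.
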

\begin{proof}
The first statement follows from Theorem \ref{thm5.1}, the fact that in our setting, $\mathcal{L} (\gamma)$ is given by $P^{(\alpha, \ell)} (\gamma)$ (see \eqref{Pzl} with $z$ replaced by $\alpha$), and the observation that the proof of Theorem 3.4 in \cite{DongShen} also applies when the
parameter $m\in \N$ used in the statement is replaced
by any real $m\geq 1$. The second statement follows from the fact that if $ |\widehat{f} (\gamma)| \leq c (1+|\gamma|)^{-s - 1 - \varepsilon}$, $c > 0$, then $\int_\R |\widehat{f} (\gamma)| (1+|\gamma|)^s d\gamma < \infty$, i.e., $f\in C^s$.
\end{proof}

Next, we consider the approximation order associated with a fractional pseudo-spline. To this end, let $\varphi$ be a pseudo-spline of order $(\alpha, \ell)$ with $1\leq\alpha\in \R$ and $\ell = 0,1, \ldots, \lfloor\alpha-\frac12\rfloor$. Define shift-invariant subspaces $\{V_n\}_{n\in \Z}$, $n\in \Z$, of $L^2(\R)$ by
\[
V_n := \clos_{L^2} \Span \left\{D^n T_k\varphi : k\in \Z\right\}
\]
and denote by $\sP_n: L^2(\R)\to V_n$ the linear operator
\be\label{eq16}
\sP_n(f) := \sum_{k\in \Z} \inn{f}{D^n T_k\varphi}\,D^n T_k\varphi.
\ee
The operator $\sP_n$ is said to provide \emph{approximation order $\alpha$} if
\[
\Vert f - \sP_n f \Vert_{L^2} \in \mathcal{O}(2^{-n\alpha})\quad\text{as $n\to\infty$},
\]
for all $f$ in the Sobolev space $H^\alpha (\R)$.

We require the following known result adapted to our setting. For a proof and details, see for instance \cite{DRoSh3}.
\begin{lemma}
The approximation order of the operator $\sP_n$ is given by $\min\{2\alpha, M\}$, where $M$ is the order of the zero of $1 - |H_0^{(\alpha,\ell)}(\gamma)|^2$ at the origin.
\end{lemma}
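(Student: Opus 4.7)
The plan is to follow the standard Fourier-domain approach for approximation orders in principal shift-invariant spaces, verifying that the arguments of \cite{DRoSh3} for integer orders extend to fractional $\alpha \geq 1$ without modification. First I would use Plancherel's theorem together with the definition \eqref{eq16} of $\sP_n$ and the rescaling $\gamma \mapsto 2^{-n}\gamma$ to rewrite the approximation error as a single integral
\[
\Vert f - \sP_n f\Vert_{L^2}^2 \;\leq\; \int_\R |\widehat{f}(\gamma)|^2\, E(2^{-n}\gamma)\, d\gamma,
\]
where the error kernel $E(\gamma)$ combines a diagonal deficit $\bigl|1 - |\widehat{\varphi}(\gamma)|^2\bigr|^2$ with an aliasing term $\sum_{k\in\Z\setminus\{0\}} |\widehat{\varphi}(\gamma)|^2\, |\widehat{\varphi}(\gamma+k)|^2$. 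The approximation order of $\sP_n$ is then the largest $\mu$ for which $E(\gamma) = \mathcal{O}(|\gamma|^{2\mu})$ near the origin.

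Next I would control the two vanishing orders separately. For the diagonal term, iterating the refinement equation $\widehat{\varphi}(2\gamma) = H_0(\gamma)\widehat{\varphi}(\gamma)$ and telescoping gives
\[
1 - |\widehat{\varphi}(\gamma)|^2 \;=\; \sum_{j=1}^{\infty} \Bigl(\prod_{i=1}^{j-1} |H_0(2^{-i}\gamma)|^2\Bigr)\bigl(1 - |H_0(2^{-j}\gamma)|^2\bigr),
\]
so the bound $|H_0|\leq 1$ from Proposition \ref{9201}, together with $H_0(0)=1$, forces the left-hand side to vanish to exactly the order $M$ of $1-|H_0|^2$ at the origin. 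For the aliasing term I would exploit the unique dyadic factorization $k = 2^{j_0}(2m+1)$ of each nonzero integer $k$: the infinite product representation $\widehat{\varphi}(\gamma+k) = \prod_{j\geq 1} H_0(2^{-j}(\gamma+k))$ then contains the factor $H_0\bigl(2^{-(j_0+1)}\gamma + (2m+1)/2\bigr)$, whose leading $(\cos^2\pi(\,\cdot\,))^\alpha$ forces a zero of order $2\alpha$ at $\gamma = 0$. Summing over $k$, the aliasing contribution vanishes to order $4\alpha$.

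Combining these two estimates yields $E(\gamma) = \mathcal{O}(|\gamma|^{2\min\{2\alpha, M\}})$, which once substituted into the integral gives the claimed approximation order; sharpness I would obtain by testing against a Sobolev function whose Fourier transform is concentrated near a point where $E$ realizes the minimal vanishing order. The main obstacle I anticipate is ruling out that the telescoping identity accidentally upgrades the zero order of $1 - |\widehat{\varphi}|^2$ strictly above $M$ through an unforeseen cancellation across the recursion; this requires a careful \emph{lower} bound on at least one term of the series, which is available thanks to the nondegeneracy $\vartheta \leq |H_0|^2 + |H_0(\,\cdot\, + \tfrac12)|^2$ in Proposition \ref{9201} together with the pointwise convergence $|H_0(2^{-j}\gamma)|^2 \to 1$ as $j\to\infty$. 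Once this point is settled, the remainder of the argument is a direct transcription of the proof in \cite{DRoSh3}, since no step there uses that $\alpha$ is an integer.
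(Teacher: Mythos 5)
The paper does not prove this lemma at all: it is stated as a known result, with the reader referred to \cite{DRoSh3} for the proof. Your proposal therefore cannot be compared with an in-paper argument; what it does is reconstruct, correctly in outline, the standard shift-invariant-space error-kernel analysis (de Boor--DeVore--Ron, Ron--Shen) that underlies the cited result: the kernel $E(\gamma)=\bigl|1-|\widehat{\varphi}(\gamma)|^2\bigr|^2+|\widehat{\varphi}(\gamma)|^2\sum_{k\neq 0}|\widehat{\varphi}(\gamma+k)|^2$, the diagonal term of order $2M$ obtained by telescoping the infinite product, and the aliasing term of order $4\alpha$ obtained from the dyadic factorization $k=2^{j_0}(2m+1)$ together with the zero of $|H_0|$ of order $2\alpha$ at $\gamma=\tfrac12$. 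This is exactly the mechanism behind the integer-order proof, and you are right that no step of it uses integrality of $\alpha$; your sketch thus makes explicit the two quantities ($|H_0|\leq 1$ and the $2\alpha$-order zero at $\tfrac12$) on which the paper's subsequent computation of the order $\min\{2\alpha,2(\ell+1)\}$ silently relies.

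Two points in your plan deserve correction, though neither is fatal. First, your anticipated ``obstacle'' about cancellation in the telescoping identity is ruled out for a different reason than the one you give: the nondegeneracy bound $\vartheta\leq |H_0(\gamma)|^2+|H_0(\gamma+\tfrac12)|^2$ is irrelevant here. What matters is the upper bound $|H_0|\leq 1$ from Proposition \ref{9201}, which makes every summand $\bigl(\prod_{i=1}^{j-1}|H_0(2^{-i}\gamma)|^2\bigr)\bigl(1-|H_0(2^{-j}\gamma)|^2\bigr)$ nonnegative; since the $j=1$ summand alone has exact order $M$ (its product prefactor tends to $1$ as $\gamma\to 0$), the whole nonnegative series has exact order $M$ --- no cancellation is possible. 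Second, passing from the per-$k$ estimate $|\widehat{\varphi}(\gamma+k)|=\mathcal{O}(|\gamma|^{2\alpha})$ to ``summing over $k$, the aliasing contribution vanishes to order $4\alpha$'' requires uniform control of the tail of $\sum_{k\neq 0}|\widehat{\varphi}(\gamma+k)|^2$ near $\gamma=0$; this is where the decay estimate $|\widehat{\varphi}(\gamma)|\leq c(1+|\gamma|)^{-2\alpha+\kappa}$ with $\kappa<2\alpha-\tfrac12$ from the regularity section must be invoked, which your sketch omits. Likewise, your opening display is an upper bound only after the cross terms between the diagonal and aliasing parts are handled by Cauchy--Schwarz; this is standard but should be said. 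With these repairs the plan is a sound, self-contained substitute for the citation.
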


We then arrive at the next theorem.
\begin{proposition}
Let $\varphi$ be a pseudo-spline of order $(\alpha, \ell)$ with $1\leq\alpha\in \R$ and $\ell = 0,1, \ldots, \lfloor\alpha-\frac12\rfloor$. The operator $\sP_n$ defined in \eqref{eq16} provides approximation order $\min\{2\alpha, 2(\ell+1)\}$.
\end{proposition}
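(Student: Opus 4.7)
The plan is to reduce the statement to a local computation at the origin via the preceding lemma, which asserts that the approximation order of $\sP_n$ equals $\min\{2\alpha, M\}$, where $M$ is the order of the zero of $1 - |H_0^{(\alpha,\ell)}(\gamma)|^2$ at $\gamma = 0$. Thus it suffices to prove that $M = 2(\ell+1)$.

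To compute $M$, I would work in the substituted variable $x := \sin^2\pi\gamma$, in which the filter takes the form $H_0^{(\alpha,\ell)}(\gamma) = q(x) = (1-x)^\alpha p(x)$ studied in Lemma \ref{60116a}. Since $\alpha\in\R$, both $p$ and $q$ are real-valued on $[0,1]$, so $|H_0^{(\alpha,\ell)}|^2 = q(x)^2$. The key is then the explicit expression
\[
q'(x) = -(\alpha+\ell)\binom{\alpha-1+\ell}{\ell}\, x^\ell (1-x)^{\alpha-1}
\]
from Lemma \ref{60116a}, together with $q(0) = p(0) = 1$. Integrating $q'$ from $0$ to $x$ and expanding $(1-t)^{\alpha-1} = 1 + O(t)$ around $t=0$, I obtain
\[
q(x) = 1 - \frac{\alpha+\ell}{\ell+1}\binom{\alpha-1+\ell}{\ell}\, x^{\ell+1} + O(x^{\ell+2})\quad\text{as } x\to 0.
\]
The coefficient in front of $x^{\ell+1}$ is strictly positive because $\alpha\geq 1$ and $0 \leq \ell \leq \lfloor\alpha-\tfrac12\rfloor$, so this term does not accidentally vanish.

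From here I factor $1 - q(x)^2 = \bigl(1-q(x)\bigr)\bigl(1+q(x)\bigr)$; since $1+q(x) = 2 + O(x^{\ell+1})$, the product has a zero of order exactly $\ell+1$ in the variable $x$. Translating back to $\gamma$ via $x = \sin^2\pi\gamma = \pi^2\gamma^2 + O(\gamma^4)$, each power $x^{\ell+1}$ contributes $\gamma^{2(\ell+1)}$, so $1 - |H_0^{(\alpha,\ell)}(\gamma)|^2$ has a zero of order $M = 2(\ell+1)$ at $\gamma = 0$. Inserting this into the preceding lemma yields the claimed approximation order $\min\{2\alpha, 2(\ell+1)\}$.

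The only step that is not pure bookkeeping is identifying the leading order of $q(x) - 1$; the ease with which this falls out is due entirely to the compact formula for $q'$ supplied by Lemma \ref{60116a}, which bypasses the need to manipulate the binomial sum defining $p$ directly. I do not anticipate any real obstacle beyond verifying that the leading coefficient is nonzero, which is immediate from the sign and non-degeneracy of $(\alpha+\ell)\binom{\alpha-1+\ell}{\ell}$ under the standing hypotheses on $(\alpha,\ell)$.
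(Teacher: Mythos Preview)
Your argument is correct and is essentially the approach the paper invokes: the paper reduces to $1 - |H_0^{(\alpha,\ell)}(\gamma)|^2 = 1 - q^2(\sin^2\pi\gamma)$ and then defers to the proof of Theorem~3.10 in Dong--Shen, whose core computation is exactly the one you carry out using the formula for $q'$ from Lemma~\ref{60116a}. The only difference is that you have unpacked that reference and made the leading-order analysis of $1-q(x)$ self-contained.
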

\begin{proof}
Note that
\[
1 - |H_0^{(\alpha,\ell)}(\gamma)|^2 = 1 - q^2(\sin^2\pi\gamma),
\]
where $q$ is defined in Lemma \ref{60116a}. The result now follows from the proof of Theorem 3.10 in \cite{DongShen} with the obvious replacements and the realization that their arguments remain unchanged when the integer $m$ in the statement is replaced by the real number $\alpha$.
\end{proof}
\begin{rem}
The above statements concerning the regularity and approximation order of the refinable functions generated by the filters $H_0^{(\alpha,\ell)}$ apply ad verbatim also to the shifted filters $H_0^{(\alpha,\ell,u)}$ since their moduli are identical.
\end{rem}
\bibliographystyle{plain}
\bibliography{Pseudosplines.bib}

\vspace{.1in}\noindent Peter Massopust\\
Zentrum Mathematik, M15\\
Technische Universit\"at M\"unchen\\
Boltzmannstr. 3\\
85747 Garching, Germany\\
massopust@ma.tum.de

\vspace{.1in}\noindent Brigitte Forster\\
Fakult\"at f\"ur Informatik und Mathematik\\
Universit\"at Passau\\
Innstr. 33\\
94032 Passau, Germany\\
brigitte.forster@uni-passau.de
{\vspace{.1in}
\noindent 

Ole Christensen\\
Department of Applied Mathematics and Computer Science\\
Technical University of Denmark\\
Building 303\\
2800 Lyngby,
Denmark \\
ochr@dtu.dk
}

\end{document}